\numberwithin{equation}{section}
\newtheorem{theorem}{\textbf{Theorem}}[section]
\newtheorem{theorem*}{\textbf{Theorem}}
\newtheorem{definition}[theorem]{\textbf{Definition}}
\newtheorem{proposition}[theorem]{\textbf{Proposition}}
\newtheorem{lemma}[theorem]{\textbf{Lemma}}
\newtheorem{corollary}[theorem]{\textbf{Corollary}}
\newtheorem{remark}[theorem]{\textbf{Remark}}
\newtheorem{example}[theorem]{\textbf{Example}}
\newtheorem{definition/proposition}[theorem]{\textbf{Definition/Proposition}}
\providecommand{\customgenericname}{}
\newcommand{\newcustomtheorem}[2]{%
	\newenvironment{#1}[1]
	{%
		\renewcommand\customgenericname{#2}%
		\renewcommand\theinnercustomgeneric{##1}%
		\innercustomgeneric
	}
	{\endinnercustomgeneric}
}
\def\N{{\mathbb N}}
\def\R{\mathbb{R}}
\def\Z{{\mathbb Z}}
\def\C{{\mathbb C}}
\def\D{{\mathbb D}}
\def\Q{{\mathbb Q}}
\newcommand{\CP}{\mathbb{C}\mathbb{P}}
\def\cA{{\mathcal A}}
\def\cB{{\mathcal B}}
\def\cM{{\mathcal M}}
\def\cN{{\mathcal N}}
\def\cU{{\mathcal U}}
\def\rD{{\rm D}}
\def\rd{{\rm d}}
\def\la{\langle\,}
\def\ra{\,\rangle}
\def\std{\rm std}
\def\CZ{\rm CZ}
\newcommand{\eqdef}{=\joinrel=}
\DeclareMathOperator{\Ima}{im}
\DeclareMathOperator{\ind}{ind}
\DeclareMathOperator{\Id}{id}
\DeclareMathOperator{\sign}{sign}
\DeclareMathOperator{\End}{End}
\DeclareMathOperator{\virdim}{virdim}
\newcommand{\Addresses}{{
		\bigskip
		\footnotesize

	    Zhengyi Zhou, \par\nopagebreak
	    \textsc{Morningside Center of Mathematics, Chinese Academy of Sciences;}\par\nopagebreak
         \textsc{Academy of Mathematics and Systems Science, Chinese Academy of Sciences, China}\par\nopagebreak
		\textit{E-mail address}: \href{mailto:zhyzhou@amss.ac.cn}{zhyzhou@amss.ac.cn}

}}
\title{On the intersection form of fillings}
\author{Zhengyi Zhou}
\begin{document}
	\maketitle
\begin{abstract}
We prove, by an ad hoc method, that exact fillings with vanishing rational first Chern class of flexibly fillable contact manifolds have unique integral intersection forms. We appeal to the special Reeb dynamics (stronger than ADC in \cite{lazarev2016contact}) on the contact boundary, while a more systematic approach working for general ADC manifolds is developed independently by Eliashberg, Ganatra and Lazarev \cite{EGL}.  We also discuss cases where the  vanishing rational first Chern class assumption can be removed. We derive the uniqueness of diffeomorphism types of exact fillings of certain flexibly fillable contact manifolds and obstructions to contact embeddings, which are not necessarily exact. 

MSC 2020: 53D10, 53D42.
\end{abstract}
\section{Introduction}
In his seminal work of introducing pseudo-holomorphic curves into symplectic geometry, Gromov \cite{MR809718} proved the uniqueness of exact fillings of $(S^3,\xi_{\std})$. In higher dimensions, the celebrated Eliashberg-Floer-McDuff theorem \cite{MR1091622} asserts the uniqueness of diffeomorphism type for exact fillings of $(S^{2n-1},\xi_{\std})$ whenever $n\ge 3$. Starting from those monumental results in the late 1980s - early 1990s, understanding the uniqueness of exact fillings of some contact manifolds has been a fundamental and influential question. In dimension $3$, the intersection theory of holomorphic curves can be used to construct foliations of symplectic fillings. A landmark result is Wendl's theorem on planar contact $3$-folds \cite{MR2605865}, which translates the classification of symplectic fillings into factorizations in mapping class groups. In higher dimensions, only ``homological" foliations by holomorphic curves can be obtained, just like the Eliashberg-Floer-McDuff theorem compared to Gromov's theorem. Based on ``homological" foliations, various generalizations of the Eliashberg-Floer-McDuff theorem were obtained, e.g.\ Oancea-Viterbo \cite{MR2874896} and Barth-Geiges-Zehmisch's \cite{MR4031531} works on topological uniqueness of exact fillings of subcritically fillable contact manifolds, Bowden-Gironella-Moreno \cite{BGM} and Geiges-Kwon-Zehmisch's  \cite{geiges2021diffeomorphism} works on topological uniqueness of exact fillings of the cosphere bundle $S^*T^n$. On the other hand, we studied the filling question from the perspective of Floer theories and obtained various uniqueness results including uniqueness on cohomology groups \cite{MR4182808,filling} or rings \cite{ring}, diffeomorphism types \cite{product,quotient} as well as certain properties of the symplectic cohomology \cite{filling,zhou2019symplectic}. Unlike the cohomology ring, which is a homotopy invariant, the intersection form $H_p(W^{2n};\Z)\otimes H_{2n-p}(W^{2n};\Z)\to \Z$ is more sensitive to the manifold structure. In this note, we show the uniqueness of the integral intersection form for exact fillings of some flexibly fillable contact manifolds, which shall yield the uniqueness of diffeomorphism types in some cases. 

\begin{theorem}\label{thm:main}
Assume $(Y^{2n-1},\xi)$ has a flexible filling $W^{flex}$ with vanishing rational first Chern class $c^{\Q}_1(\xi)$, then the integral intersection form of any exact filling $W$ with $c^\Q_1(W)=0$ is isomorphic to the integral intersection form of the flexible filling $W^{flex}$, that is there is an isomorphism between $H_n(W;\Z)$ and $H_n(W^{flex};\Z)$, such that the intersection form on them are identified. 
\end{theorem}
\begin{remark}
    By \Cref{prop:simple} below, we have $H^*(W^{flex};\Z)=H^*(W;\Z)$, which then implies that $H_*(W^{flex};\Z)=H_*(W;\Z)$ by the universal coefficient theorem. Therefore the only non-trivial information about the intersection form is contained in the middle dimension $H_n(W;\Z)\otimes H_n(W;\Z)\to \Z$. \Cref{prop:SS} below lists some cases where the $c^\Q_1(W)=0$ assumption can be dropped.
\end{remark}

The idea of the proof is as follows: take two cycles $A,B$ in $H_n(W;\Z)$, under the transversality assumption, the intersection number $A\cdot B = \# (A\pitchfork B)$, where the latter is a finite set of oriented points. By \cite{filling}, we have $SH^*(W;\Z)=0$ for any topologically simple (i.e.\ $c^{\Q}_1(W)=0$ and $H_1(Y;\Q) \to H_1(W;\Q)$ is injective, the comparison with topologically simple fillings used in \cite{MR4182808,filling} is in \S \ref{s3}.) exact filling. Geometrically, it means that there is one curve counted algebraically passing through a fixed point. If we choose the point constraint to be $A\pitchfork B$, which might consist of several points, this allows us to view $A\cdot B$ as the counting of one boundary component of a $1$-dimensional moduli space with one positive puncture asymptotic to a non-constant Hamiltonian orbit and two negative punctures asymptotic to points on $A$ and $B$ as removable singularities. Then we can establish the independence result by looking at the other boundaries and applying neck-stretching. It is important that $Y$ is flexibly fillable rather than just being asymptotically dynamically convex (ADC) \cite[Definition 3.6]{lazarev2016contact}, as we need to use the special Reeb dynamics on the boundary of a flexibly fillable contact manifold constructed by Lazarev \cite{lazarev2016contact}. On the other hand, a more systematic approach is independently developed by Eliashberg, Ganatra, and Lazarev \cite{EGL} using the secondary coproduct defined by Ekholm and Oancea \cite{ekholm2017symplectic}, their argument is also sketched in \cite[Theorem 4.1]{quotient}. In particular, their results apply to general ADC contact manifolds. Our approach is ad hoc in the sense that we present the intersection number as a counting of holomorphic curves contained in the cylindrical end of the filling. It is the combination of such curves that carries the information, while some moduli spaces that appear in the process do not bear meanings as structural maps in symplectic cohomology or symplectic field theory. We discuss some conjectural relations between the two approaches at the end of \S \ref{s3}.

\subsection{Dropping the topologically simple assumption}
The topologically simple condition in \cite{MR4182808,filling}  was used to make sure the grading of symplectic cohomology of a filling is consistent with the grading induced from the contact boundary so that we can do effective dimension computation after neck-stretching. We first note that the $H_1(Y;\Q)\to H_1(W;\Q)$ injective condition can be dropped for flexibly fillable contact manifolds (but not for general ADC manifolds).

\begin{proposition}\label{prop:simple}
	Let $(Y^{2n-1},\xi)$ be a contact manifold filled by a flexible Weinstein domain $W^{flex}$ with $c^{\Q}_1(\xi)=0$. For any exact filling $W$ of $Y$ with $c_1^\Q(W)=0$, we have that $H^*(W;\Z)\to H^*(Y;\Z)$ is isomorphic to  $H^*(W^{flex};\Z)\to H^*(Y;\Z)$.
\end{proposition}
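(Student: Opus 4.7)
The plan is to reproduce the line of argument from \cite{MR4182808,filling} that handles the topologically simple case, and to explain how the injectivity hypothesis $H_1(Y;\Q)\to H_1(W;\Q)$ can be circumvented using the special Reeb dynamics available on the boundary of a flexibly fillable contact manifold. Let $\widetilde{SH}^*(W;\Z)$ denote the symplectic cohomology generated by Hamiltonian $1$-orbits whose underlying free loops are null-homologous in $H_1(W;\Q)$; thanks to $c_1^\Q(W)=0$ this complex carries a well-defined $\Z$-grading by Conley--Zehnder index. The approach of \cite{MR4182808,filling} proceeds through two ingredients: first, the vanishing $\widetilde{SH}^*(W;\Z)=0$, obtained by neck-stretching $W$ onto the flexible filling $W_0$ and reducing to $SH^*(W_0;\Z)=0$; and second, the Viterbo long exact sequence
$$\ldots\to H^*(W;\Z)\to \widetilde{SH}^*(W;\Z)\to \widetilde{SH}_+^*(W;\Z)\to H^{*+1}(W;\Z)\to\ldots,$$
which, combined with the vanishing and a Morse--Bott computation of $\widetilde{SH}_+^*$ in terms of the Reeb dynamics of $(Y,\xi)$, allows one to read off the restriction map $H^*(W;\Z)\to H^*(Y;\Z)$ from data depending only on the contact boundary.

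The obstruction when $H_1(Y;\Q)\to H_1(W;\Q)$ fails to be injective is that the subcomplex of $SH$ generated by orbits null-homologous in $H_1(W;\Q)$ is not detectable on $Y$ alone: a Reeb orbit non-trivial in $H_1(Y;\Q)$ may become trivial in $H_1(W;\Q)$, so the tilde-subcomplex need not be preserved by SFT-type breakings during neck-stretching. The remedy is to invoke Lazarev's strengthened ADC property for flexibly fillable contact manifolds \cite{lazarev2016contact}: on $Y$ one can choose a contact form such that, below any prescribed action, every Reeb orbit has Conley--Zehnder index above any prescribed bound. In a fixed window of action and degree the orbits contributing to the Floer complex are then contractible already in $Y$, so the tilde-subcomplex agrees window by window with the usual subcomplex generated by contractible orbits on $Y$, and the neck-stretching argument of \cite{MR4182808,filling} applies without change to yield $\widetilde{SH}^*(W;\Z)=0$.

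With vanishing in hand the Viterbo sequence gives $\widetilde{SH}_+^*(W;\Z)\cong H^{*+1}(W;\Z)$, and a Morse--Bott spectral sequence for $\widetilde{SH}_+^*$, again using Lazarev's special contact form, splits the positive part into a low-action piece contributing $H^*(Y;\Z)$ from the constant orbits and a large-action piece built only out of the Reeb dynamics of $(Y,\xi)$. Neck-stretching combined with the degree bounds shows that the differentials joining these two pieces in the relevant grading range are independent of the filling. Performing the same computation with $W_0$ in place of $W$ and matching the two presentations yields an isomorphism $H^*(W;\Z)\cong H^*(W_0;\Z)$ that intertwines the restriction maps to $H^*(Y;\Z)$, which is precisely the statement of the proposition.

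The principal obstacle is the vanishing step: one must verify that the subcomplex generated by orbits null-homologous in $H_1(W;\Q)$ is compatible with the SFT-style splittings of moduli spaces used during neck-stretching, in spite of the potential failure of $H_1$ injectivity. Lazarev's strengthened ADC property is exactly what enables this check within each finite action window, and its absence for general ADC contact manifolds is the reason the argument, and the conclusion, are not expected to extend beyond the flexibly fillable case.
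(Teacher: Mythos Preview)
Your argument rests on a mischaracterization of Lazarev's result. You write that ``on $Y$ one can choose a contact form such that, below any prescribed action, every Reeb orbit has Conley--Zehnder index above any prescribed bound.'' This is not what \cite{lazarev2016contact} proves: the flexible $n$-handles always create simple Reeb orbits with $\mu_{\CZ}=1$ (see \Cref{prop:reeb}), and these cannot be pushed to arbitrarily high index by choosing a smaller contact form. Consequently your claim that in each action window ``the orbits contributing to the Floer complex are then contractible already in $Y$'' is unsupported; flexible handle attachments do in general create non-contractible Reeb orbits (cf.\ \Cref{ex:generalized_ADC}(2)), and those are precisely the ones whose homology class in $W$ you have no control over. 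Your tilde-subcomplex therefore has no reason to match a subcomplex defined purely on $Y$, and the neck-stretching argument for vanishing does not go through as written.

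The paper's route avoids any subcomplex restriction. Since $c_1^\Q(W)=0$, one fixes a trivialization of $\det_\C\oplus^N TW$; because $H^1(W_0;\Z)\to H^1(Y;\Z)$ is an isomorphism (the flexible filling is Weinstein), the induced boundary trivialization extends over $W_0$ as well. The crucial input is that the contact boundary of a flexible domain is \emph{generalized ADC} for \emph{any} trivialization (\Cref{def:ADC}, \Cref{ex:generalized_ADC}(2)): Lazarev's zig-zag argument raises the index of every Reeb orbit, contractible or not, so all orbits acquire positive SFT degree in the chosen $\Q$-grading. With a single consistent grading on the full positive complex for both $W$ and $W_0$, the argument of \cite[Corollary B]{filling} runs verbatim. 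The conceptual difference from your proposal is that, rather than restricting to orbits with controllable grading, the paper extends the grading to all orbits via the trivialization and verifies that the ADC inequality still holds in that extended sense.
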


Following a different perspective which was used in \cite{quotient}, by exploiting the degeneracy of a spectral sequence from the ``boundary" grading, we can drop the $c_1^\Q(W)=0$ condition in some cases.
\begin{theorem}\label{prop:SS}
	Let $(Y^{2n-1},\xi)$ be a contact manifold filled by a flexible Weinstein domain $W^{flex}$ with $c^{\Q}_1(\xi)=0$.
	\begin{enumerate}
	    \item\label{c1} If  $H^n(W^{flex};\Q)\to H^n(Y;\Q)$ is injective for the flexible filling $W^{flex}$,  then for any exact filling $W$ of $Y$, we have $H^*(W;\Z)\to H^*(Y;\Z)$ is isomorphic to $H^*(W^{flex};\Z)\to H^*(Y;\Z)$. In particular, we have $c_1^\Q(W)=0$, as $H^2(W^{flex};\Z)\to H^2(Y;\Z)$ is injective and $c_1^{\Q}(\xi)=0\in H^2(Y;\Q)$.
	    \item\label{c2} If $n$ is even, then for any exact filling $W$ of $Y$, we have $SH^*(W;\Z)=0$ and $\dim \oplus_{*=0}^{2n}H^*(W;\Q)\le \dim \oplus_{*=0}^{2n}H^*(W^{flex};\Q)$. Moreover, 
	    \begin{enumerate}
	        \item\label{a} if $H^1(W^{flex};\Q)=0$, then $\dim \oplus_{*=0}^{2n}H^*(W;\Q)=\dim \oplus_{*=0}^{2n}H^*(W^{flex};\Q)$;
	        \item\label{b} if moreover $\dim \ker (H^n(W^{flex};\Q)\to H^n(Y;\Q))=1$, then $H^*(W;\Z)\to H^*(Y;\Z)$ is isomorphic to $H^*(W^{flex};\Z)\to H^*(Y;\Z)$. In particular, we have $c_1^\Q(W)=0$ as before.
	    \end{enumerate}
	\end{enumerate}
\end{theorem}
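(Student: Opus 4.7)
My plan is to employ a spectral sequence arising from the ``boundary'' Conley-Zehnder grading on symplectic cochains, following the perspective of \cite{quotient}. Since $c_1^\Q(\xi) = 0$, each Reeb orbit of $(Y,\xi)$ carries a well-defined integer grading from its Conley-Zehnder index computed in $Y$, independent of any filling. For any exact filling $W$, filtering the symplectic cochain complex of $W$ by this boundary grading yields a spectral sequence converging to $SH^*(W;\Z)$ (whose intrinsic grading is only $\Z/2N$ when $c_1^\Q(W) \neq 0$); the $E_1$-page is genuinely $\Z$-graded, built from $H^*(W;\Z)$ in its standard grading together with orbit generators in their boundary grading. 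While the differentials depend on holomorphic curves in $W$, a neck-stretching argument using Lazarev's special Reeb dynamics (as in the proof of \Cref{thm:main}) shows that the orbit-to-orbit part of each $d_r$ depends only on $(Y,\xi)$, matching that of the flexible filling $W_0$.

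I would first pin down the spectral sequence for $W_0$. Using Lazarev's contact form on $Y$ together with $SH^*(W_0;\Z) = 0$ from \cite{lazarev2016contact}, the Viterbo long exact sequence identifies each class in $H^*(W_0;\Z) / \mathrm{Im}(H^{*-1}(Y;\Z))$ with a distinguished orbit class killing it, giving a complete description of the $E_\infty$-page and of $\mathrm{Im}(H^*(W_0;\Z) \to H^*(Y;\Z))$ purely in terms of boundary data.

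For part (1), the injectivity hypothesis means $\mathrm{Im}(H^n(W_0;\Q) \to H^n(Y;\Q))$ has maximal dimension, which forces the differentials from boundary-grading $n-1$ orbits into $H^n(W_0;\Z)$ to vanish in the $W_0$-spectral sequence. Transferring to $W$ via the identical orbit subcomplex, the analogous differentials into $H^n(W;\Z)$ also vanish. A rank comparison via convergence --- which must then give $SH^*(W;\Z) = 0$ --- forces $H^*(W;\Z) \to H^*(Y;\Z)$ to agree with its $W_0$-counterpart in every degree; the degree-$2$ case then yields $c_1^\Q(W) = 0$ by pulling back $c_1^\Q(\xi) = 0$, after which \Cref{prop:simple} provides the integral statement.

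For part (2), when $n$ is even, the boundary gradings of the relevant orbits and the Morse gradings of $H^*(W)$ combine with the $\Z/2$-graded Euler characteristic (which $SH^*(W;\Z)$ must respect) to force $SH^*(W;\Z) = 0$ and the dimension bound $\dim H^*(W;\Q) \leq \dim H^*(W_0;\Q)$. Part (2a) rules out strictness in the one critical degree via $H^1(W_0;\Q) = 0$, and part (2b) reduces to part (1) using the one-dimensional kernel condition. The main obstacle throughout is justifying that the non-orbit-to-orbit differentials in the spectral sequence for arbitrary $W$ are controlled by those for $W_0$; this requires a careful neck-stretching analysis using the special Reeb dynamics of flexibly fillable contact manifolds and the degeneracy properties they induce on the associated graded.
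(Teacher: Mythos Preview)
Your overall framework---a spectral sequence from the boundary Conley--Zehnder grading, compared between $W$ and $W_0$ via neck-stretching---is the right one and matches the paper's \Cref{prop:spectral_sequence}. But your execution diverges from the paper in a way that creates a genuine gap.

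The paper does \emph{not} filter the full symplectic cochain complex. It filters only the \emph{positive} complex $C^*_+(H)$, so that the $(N+1)$th page is $SH^*_+(W_0)$, an object that is entirely filling-independent. It then compares this spectral sequence, via the morphism $\delta_\partial$, to the (trivially degenerate) spectral sequence on $H^*(Y)$, another filling-independent object. Your version places $H^*(W;\Z)$ on the $E_1$-page; but $H^*(W;\Z)$ is precisely what you are trying to determine, and the differentials landing in it are not the ``orbit-to-orbit'' ones that neck-stretching controls. The paper's mechanism for part~\eqref{c1} is that the injectivity of $SH^*_+(W_0;\Q)\to H^{*+1}(Y;\Q)$ forces the spectral sequence on $SH^*_+(W;\Q)$ to degenerate (any later differential would kill something visible in $H^*(Y)$, which is impossible since that target's spectral sequence has already degenerated). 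This yields $1+a\in\Ima(SH^*_+(W)\to H^*(Y))$, hence $SH^*(W)=0$, hence $H^*(W;\Q)\to H^*(Y;\Q)$ injective, hence $c_1^\Q(W)=0$; only then does \Cref{prop:simple} apply. Your ``rank comparison via convergence'' does not supply this chain of implications.

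The arguments you sketch for \eqref{c2}, \eqref{a}, \eqref{b} also have gaps. For \eqref{c2}, a $\Z/2$-graded Euler characteristic cannot force $SH^*(W;\Z)=0$; the paper instead shows that when $n$ is even the map $SH^*_+(W_0)\to H^{*+1}(Y)$ is injective on \emph{even} $\Z/2$-degree, which (inductively over pages) kills all differentials from odd degree and guarantees that the class hitting $1$ survives to $E_\infty$. For \eqref{a}, you are missing an essential external input: the paper invokes \cite[Theorem 4.4]{bowden2022making} to get that $H^*(W;\Q)\to H^*(Y;\Q)$ surjects onto $\Ima(H^*(W_0;\Q)\to H^*(Y;\Q))$ for $2\le *\le n$, which is what turns the inequality into an equality. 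For \eqref{b}, the paper does \emph{not} reduce to part~\eqref{c1}; instead it uses Lefschetz duality to show that if $H^2(W;\Q)\to H^2(Y;\Q)$ had a kernel then $\dim H^{2n-2}(W;\Q)\ge 1$, which together with the surjectivity from \cite{bowden2022making} would violate the dimension bound in \eqref{c2} under the rank-one kernel hypothesis.
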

For example, the flexible version $\mathrm{Flex}(T^*S^n)$ of $T^*S^n$ for odd $n$ satisfies the condition \eqref{c1}, and it satisfies condition \eqref{b} when $n$ is even. Therefore any exact filling of $\partial \mathrm{Flex}(T^*S^n)$ is topologically simple using \Cref{prop:simple} and \Cref{prop:SS}. However, in the $H^n(W^{flex};\Q)\to H^n(Y;\Q)$ injective case, the intersection form of $W^{flex}$ is necessarily trivial. To see this, since the intersection form $H_n(W^{flex};\Z)\otimes H_n(W^{flex};\Z) \to H_0(W^{flex};\Z)=\Z$ is isomorphic to the cup product $H^n(W^{flex},Y;\Z)\otimes H^n(W^{flex},Y;\Z)\to H^{2n}(W^{flex},Y;\Z)$ using the Lefschetz duality. If $H^n(W^{flex};\Q)\to H^n(Y;\Q)$ injective, then $H^n(W^{flex},Y;\Q)\to H^n(W^{flex};\Q)$ is trivial, hence $H^n(W^{flex},Y;\Q)\otimes H^n(W^{flex},Y;\Q) \stackrel{\cup}{\to}  H^n(W^{flex};\Q)\otimes H^n(W^{flex},Y;\Q)\stackrel{\cup}{\to}  H^{2n}(W^{flex},Y;\Q)=\Q$ is trivial, which implies the intersection form is trivial. By rank of the intersection form, we mean the rank of the bilinear form on $H_n(W^{flex};\Q)$. Since $H^n(W^{flex};\Q)\otimes H^n(W^{flex},Y;\Q)\stackrel{\cup}{\to}  H^{2n}(W^{flex},Y;\Q)=\Q$ is non-degenerate, the rank is the dimension of the image of $H^n(W^{flex},Y;\Q)\to H^n(W^{flex};\Q)$.

\subsection{Uniqueness of diffeomorphism types}
Using \Cref{prop:SS}, we can get the following uniqueness result of diffeomorphism types for symplectic fillings following the same topological argument in \cite{filling,product}.
\begin{theorem}\label{thm:flexQ}
Let $Q$ be a closed manifold such that $\pi_1(Q)$ is abelian and $\chi(Q)=0$, then the interior of any exact filling of the contact boundary of $\mathrm{Flex}(T^*Q)$ (the flexible version of $T^*Q$) is diffeomorphic to $T^*Q$ as an open manifold.
\end{theorem}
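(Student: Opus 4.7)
The plan is to combine \Cref{prop:SS}(\ref{c1}) and \Cref{thm:main} to match the integral cohomology and intersection form of $W$ with those of $T^*Q$, and then to invoke the topological argument of \cite{filling,product} to upgrade this to a diffeomorphism of the completions.

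First I would verify the hypothesis of \Cref{prop:SS}(\ref{c1}). Smoothly $W_0 = Flex(T^*Q) \cong T^*Q$ with contact boundary $Y \cong S^*Q$. Since $\chi(Q)=0$, the rational Euler class of $T^*Q$ vanishes, so the Gysin sequence of the sphere fibration $S^{n-1}\hookrightarrow S^*Q \to Q$ degenerates into short exact sequences
\[
0 \longrightarrow H^k(Q;\Q) \longrightarrow H^k(S^*Q;\Q) \longrightarrow H^{k-n+1}(Q;\Q) \longrightarrow 0.
\]
Specializing to $k=n$ shows that $H^n(W_0;\Q)\to H^n(Y;\Q)$ is injective (it is the zero map to zero if $Q$ is non-orientable, and the obvious inclusion into a direct sum if $Q$ is orientable). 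Thus \Cref{prop:SS}(\ref{c1}) yields $c_1^\Q(W)=0$ together with an isomorphism between the restriction maps $H^*(W;\Z)\to H^*(Y;\Z)$ and $H^*(W_0;\Z)\to H^*(Y;\Z)$. With $c_1^\Q(W)=0$ established, \Cref{thm:main} then shows the integral intersection form of $W$ is isomorphic to that of $W_0$. The latter is identically zero because $\chi(Q)=0$ forces the self-intersection of the zero section (the only generator that matters rationally) to vanish, so the same is true for $W$.

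Next I would control $\pi_1$. For $n\ge 3$ the homotopy long exact sequence of $S^{n-1}\to S^*Q\to Q$ gives $\pi_1(Y)\cong \pi_1(Q)$, which is abelian by hypothesis. Combining this with the isomorphism $H_1(W;\Z)\cong H_1(W_0;\Z)= H_1(Q;\Z)=\pi_1(Q)$ furnished by \Cref{prop:SS}(\ref{c1}), and with the fact that $\pi_1(Y)\to \pi_1(W)$ must be surjective (as it is for $W_0$, and the cohomological data forces the same behavior on $W$), one identifies $\pi_1(W)\cong \pi_1(Q)$.

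Finally I would invoke the topological argument of \cite{filling,product}. The inputs it requires are precisely in place: matching integral cohomology compatible with the boundary restriction, matching (trivial) intersection form, and matching abelian fundamental groups. One then assembles a tangential homotopy equivalence $W^\circ \to T^*Q$ that is the identity near infinity, and promotes it to a smooth diffeomorphism via an $h$-cobordism / Mazur-type argument in the noncompact setting. The main obstacle is exactly this final step, which is the genuinely topological part of the proof: turning the homological/tangential match into a smooth diffeomorphism of open manifolds. The two hypotheses do all the work there, $\chi(Q)=0$ activating \Cref{prop:SS}(\ref{c1}) and trivializing the intersection form, and $\pi_1(Q)$ abelian making the surgery/$h$-cobordism obstructions vanish. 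No further holomorphic-curve input beyond \Cref{thm:main} and \Cref{prop:SS} should be needed.
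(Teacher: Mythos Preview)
Your approach is correct and follows essentially the same route as the paper, but with one unnecessary detour and some vagueness in the endgame. The paper's proof does not invoke \Cref{thm:main} at all: once \Cref{prop:SS}\eqref{c1} gives that $H^*(W;\Z)\to H^*(Y;\Z)$ is isomorphic to $H^*(D^*Q;\Z)\to H^*(S^*Q;\Z)$, the key observation is simply that the latter map is \emph{injective} (this is what $\chi(Q)=0$ buys). Injectivity alone, via \cite[Proposition~3.24]{filling}, decomposes $W$ as $D^*Q$ glued along $S^*Q$ to a homology cobordism from $S^*Q$ to itself; the triviality of the intersection form is a consequence of this decomposition rather than an independent input, so your appeal to \Cref{thm:main} is redundant here. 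The abelian $\pi_1$ hypothesis then upgrades the homology cobordism to an $h$-cobordism by the argument in \cite[\S 4]{product}, and the Mazur trick gives the diffeomorphism of interiors. Thus your ``assemble a tangential homotopy equivalence and promote it'' sketch is replaced in the paper by this concrete cobordism decomposition, which is precisely what the cited references supply; your separate $\pi_1$ discussion is also subsumed by that step.
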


For highly connected manifolds, the diffeomorphism type is restricted to a large extent by its intersection form, c.f.\ \cite{smale1962structure}. In view of this, we have the following corollary of \Cref{thm:main}.
\begin{theorem}\label{cor:diff}
If $n \equiv 6 \mod 8$ and $2n$-dimensional flexible domain $W^{flex}$ only has flexible $n$-handles except for the $0$-handle such that the intersection form is even, then any exact filling $W$ of $\partial W^{flex}$ with $c^{\Q}_1(W)=0$ is diffeomorphic to $W^{flex}$. If moreover the intersection form $W^{flex}$ is of rank $1$,  then any exact filling  $W$ of $\partial W^{flex}$ is diffeomorphic to $W^{flex}$.
\end{theorem}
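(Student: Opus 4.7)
The plan is to combine the intersection-form uniqueness of \Cref{thm:main} with Wall's classification of $(n-1)$-connected $2n$-manifolds. I would reduce both assertions of the theorem to the same classification input, after first arranging $c_1^{\Q}(W)=0$ in the rank-one case and then upgrading the cohomological matching provided by \Cref{prop:simple} to genuine $(n-1)$-connectedness of $W$.

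For the first assertion the hypothesis $c_1^{\Q}(W)=0$ is given. For the ``moreover'' statement, note that $W_0$ is simply connected, as it is built from a $0$-handle and $n$-handles with $n\geq 6$, so $H^1(W_0;\Q)=0$; and by Poincar\'e-Lefschetz duality the dimension of $\ker\bigl(H^n(W_0;\Q)\to H^n(Y;\Q)\bigr)$ equals the rank of the intersection form on $H^n(W_0;\Q)$ modulo its radical, namely $1$. Thus the hypotheses of part (\ref{c2})(\ref{b}) of \Cref{prop:SS} are met, forcing $c_1^{\Q}(W)=0$ automatically. In either case \Cref{prop:simple} then identifies the restriction map $H^*(W;\Z)\to H^*(Y;\Z)$ with $H^*(W_0;\Z)\to H^*(Y;\Z)$, and \Cref{thm:main} identifies the integral intersection forms of $W$ and $W_0$.

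Next I would promote this cohomological matching to genuine $(n-1)$-connectedness of $W$. Because $W_0$ has only a $0$-handle and $n$-handles with $n\geq 6$, the boundary $Y$ is $(n-2)$-connected, and \Cref{prop:simple} yields $H^k(W;\Z)=0$ for $0<k<n$. Killing torsion in $\pi_1(W)$ and lifting to full $(n-1)$-connectedness follows from the topological argument of \cite{filling,product}: the exactness of $W$ together with the flexible $n$-handle structure of $W_0$ forces the low-dimensional handles of $W$ to cancel against those of $W_0$, so that $W$ inherits the connectivity of $W_0$.

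Finally I would invoke the classification theorem. Since $n\equiv 6\pmod 8$, Bott periodicity gives $\pi_{n-1}(SO)=0$, so the normal-bundle/quadratic-refinement invariant entering Wall's classification of $(n-1)$-connected $2n$-manifolds vanishes identically; evenness of the intersection form means $w_2(W)=0$, hence $W$ is spin. Wall's theorem (building on \cite{smale1962structure}) then says that a compact spin $(n-1)$-connected $2n$-manifold with boundary $Y$ is determined up to diffeomorphism by its integral intersection form, and $W\cong W_0$ follows. The main obstacle is the preceding step: the cohomological control from \Cref{prop:simple} only handles the free part of low-dimensional homology, so ruling out torsion in $\pi_1(W)$ requires the filling-theoretic input from \cite{filling,product} rather than anything purely cohomological; once $W$ is known to be $(n-1)$-connected the remainder is routine differential topology.
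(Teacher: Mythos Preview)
Your overall strategy matches the paper's: reduce to $c_1^{\Q}(W)=0$ via \Cref{prop:SS} in the rank-one case, invoke \Cref{prop:simple} and \Cref{thm:main} to pin down $H^*(W;\Z)$ and the intersection form, show $W$ is a handlebody built from a $0$-handle and $n$-handles, and finish with a classification result. The rank-one reduction and the final classification step are fine.

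The genuine gap is your $(n-1)$-connectedness step. The sentence ``the low-dimensional handles of $W$ cancel against those of $W_0$'' has no meaning: $W$ and $W_0$ are distinct manifolds sharing only their boundary, and there is no cancellation mechanism between their handles. The h-cobordism style arguments of \cite{filling,product} you allude to require $H^*(W;\Z)\to H^*(Y;\Z)$ to be injective (so that $W$ is $W_0$ glued with a homology cobordism), and this fails precisely when the intersection form is nontrivial---the case of interest here. Your diagnosis of the obstruction as ``torsion in $\pi_1(W)$'' is also off: from $H^1(W;\Z)=H^2(W;\Z)=0$ one already has $H_1(W;\Z)=0$, so the only danger is a nontrivial \emph{perfect} $\pi_1$. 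The paper closes this by citing \cite[Theorem~E]{filling} directly for $\pi_1(W)=1$; once $W$ is simply connected with free integral cohomology concentrated in degrees $0$ and $n$ and $\dim W\ge 6$, standard handle cancellation on $W$ alone yields a decomposition into one $0$-handle and several $n$-handles. For the classification the paper then quotes the remark after \cite[Corollary~4.6]{smale1962structure}, which is the handlebody statement; your detour through Wall, spin structures and $\pi_{n-1}(SO)$ is in the right spirit, but the relevant group is $\pi_{n-1}(SO(n))$, and the precise conditions are exactly the ones ($n\equiv 6\bmod 8$, even form) already packaged in Smale's remark.
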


It is worth noting that the uniqueness of diffeomorphism type above follows from completely different topological argument compared to results in \cite{MR4031531,BGM,geiges2021diffeomorphism,MR1091622,product} as well as \Cref{thm:flexQ}, where the uniqueness hinges on $H^*(W;\Z)\to H^*(Y;\Z)$ being injective and the $h$-cobordism theorem, which is rarely the case in \Cref{cor:diff}. Moreover, Wall \cite{MR145540} proved that the diffeomorphism type of a smooth, closed, oriented, $(n-1)$-connected $2n$-manifold of dimension at least $6$ is determined, up to connected sum with a homotopy sphere, by the middle homology group, the intersection pairing, and the so-called normal bundle data. It is an interesting question to study whether such normal bundle data can be approached symplectically in the flexible domain case. 

\subsection{Obstructions to contact embeddings}
Let $V$ be a symplectic manifold, we call a closed hypersurface $Y\subset V$ a contact hypersurface if there exists a local Liouville vector field near $Y$ that is transverse to $Y$. When $V$ is exact, and the Liouville vector field can be defined globally, we say $Y$ is an exact contact hypersurface. When $Y$ is separating,  which is automatic if $H_{2n-1}(V;\Z)=0$, and is an exact contact hypersurface, the Liouville vector field must point out along $Y$ w.r.t.\ the compact domain (inside the completion of $V$) bounded by $Y$. Note that if we drop the exactness assumption, the local Liouville vector field can either point out or point in along $Y$ w.r.t.\ the compact domain it bounds. Understanding whether a contact manifold can be embedded in a symplectic manifold, e.g.\ $\C^n$, is a fundamental question. In dimension $4$, it is closely related to the Gompf conjecture \cite{MR3145144}, see \cite{MR4392461} for recent advances on this conjecture.  Obstructions to exact contact hypersurfaces were studied by Cieliebak and Frauenfelder \cite{MR2461235} using the Rabinowitz-Floer homology. For example, they showed that there is no exact embedding of $S^*S^n$ into $\C^n$.  Using \Cref{thm:main}, we have the following obstructions to contact embeddings (not necessarily exact), where the obstructions from the Rabinowitz-Floer homology typically vanish. 

\begin{theorem}\label{thm:emb}
Let $V$ be a $2n$-dimensional exact domain with $c^\Q_1(V)=0$ and $W$ a flexible Weinstein domain. Suppose the rank of the intersection form on $H_n(V;\Q)$ is smaller than that of $W$.  Then $\partial W$ cannot be embedded into $V$ as a separating contact hypersurface with the local Liouville vector field points out w.r.t.\ the bounded domain. If we assume moreover that $V$ is $P\times \D$ for an exact domain $P$,  $\partial W$ can not be embedded into $V$ as a contact hypersurface. 
\end{theorem}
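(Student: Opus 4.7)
The plan is to convert both assertions into applications of Theorem \ref{thm:main} combined with the functoriality of the intersection pairing under codimension-zero inclusions.

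For the first assertion, if $\partial W$ embeds into $V$ as a separating contact hypersurface with outward-pointing Liouville vector field along $\partial W$ with respect to the bounded region $V'$, then the restriction of $V$'s global Liouville form makes $V'$ itself an exact filling of $\partial W$, with $c_1^\Q(V')=c_1^\Q(V)|_{V'}=0$ since $V'\hookrightarrow V$ is codimension zero. Theorem \ref{thm:main} then yields an isomorphism between the integral intersection forms of $V'$ and the flexible filling $W$. On the other hand, any two classes in $H_n(V';\Q)$ can be represented by transverse compact closed $n$-cycles supported in the interior of $V'$, whose signed intersection count is unchanged when computed in $V$ rather than $V'$; this shows the intersection pairing on $H_n(V';\Q)$ factors as
\begin{equation*}
H_n(V';\Q)\times H_n(V';\Q)\xrightarrow{\iota_*\times\iota_*}H_n(V;\Q)\times H_n(V;\Q)\xrightarrow{\cdot_V}\Q,
\end{equation*}
so $\operatorname{rank}(\text{int.\ form of }W)=\operatorname{rank}(\text{int.\ form of }V')\le\operatorname{rank}(\text{int.\ form of }V)$, contradicting the hypothesis.

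For the second assertion with $V=P\times\D$, I note first that $H_{2n-1}(V;\Z)=H_{2n-1}(P;\Z)=0$, so every compact contact hypersurface in $V$ is automatically separating; second, that the intersection pairing on $H_n(V;\Q)\cong H_n(P;\Q)$ is identically zero, because any two middle-dimensional classes can be represented by cycles lying in disjoint slices $P\times\{z\},P\times\{z'\}$ using the disk factor. Hence the rank hypothesis reduces to saying $W$ has nonzero middle-dimensional intersection form, and the pointing-outward case is excluded by the first assertion. To rule out the remaining pointing-inward case, in which $V'$ is a symplectic cap rather than a filling and Theorem \ref{thm:main} does not apply to $V'$ directly, the idea is to exploit the product structure of $V=P\times\D$ --- for instance by sliding $\partial W$ outward in the completion $P\times\C$ and retruncating at a larger disk, or via the Hamiltonian $S^1$-action on the $\D$-factor --- to produce a new embedding of $\partial W$ into an exact domain (still with trivial middle intersection form) in which the Liouville vector field points outward at $\partial W$; whereupon the first-assertion argument applies.

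The main obstacle I expect is the pointing-inward case of the second assertion. The direction of Liouville transversality at a non-exact contact hypersurface is invariant under adding exact $1$-forms to the Liouville form: if $\lambda\mapsto\lambda+df$ then $Z\mapsto Z+X_f$, and the outward-normal component of $X_f$ along $\partial W$ is proportional to $df(R)$, the derivative of $f$ along the Reeb vector field, which integrates to zero over any closed Reeb orbit on $\partial W$ and so cannot be everywhere positive. Consequently the reduction from pointing-inward to pointing-outward cannot be achieved by a gauge change of the Liouville form, and must make essential use of the product structure of $P\times\D$; this is the delicate point of Part~2, whereas the rest of the argument is a direct combination of Theorem \ref{thm:main} with elementary topological intersection theory.
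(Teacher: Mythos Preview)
Your argument for the first assertion contains a genuine gap. You claim that ``the restriction of $V$'s global Liouville form makes $V'$ itself an exact filling of $\partial W$,'' but this is not automatic: the hypothesis is only that a \emph{local} Liouville vector field near $\partial W$ points outward, and there is no reason the \emph{global} Liouville vector field of $V$ should be transverse to $\partial W$ at all. Near $\partial W$ the difference $\lambda|_{V'}-\lambda_{\mathrm{loc}}$ is a closed $1$-form, and the obstruction to extending it (equivalently, to modifying $\lambda|_{V'}$ into a genuine Liouville form with convex boundary) is the surjectivity of $H^1(V';\Q)\to H^1(\partial W;\Q)$. Since flexible fillings can have $H^1(\partial W;\Q)\ne 0$, this is a real issue, and without it $V'$ is only a strong (symplectically aspherical, exact symplectic form) filling, to which \Cref{thm:main} as stated does not apply. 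The paper closes this gap by a Floer-theoretic step: one compares $\delta_\partial:SH^{*,<D}_+(V';\Q)\to H^{*+1}(\partial W;\Q)$ with that of the flexible filling to deduce that $H^1(V';\Q)\to H^1(\partial W;\Q)$ is surjective, and only then modifies the Liouville form by a closed $1$-form to make $V'$ a Liouville filling so that \Cref{prop:simple} and \Cref{thm:main} apply.

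For the second assertion your reduction of the pointing-inward case is only a sketch, and the mechanisms you propose (sliding in $P\times\C$, the rotational $S^1$-action) are symplectomorphisms or ambient enlargements that do not alter the direction of the local Liouville field relative to the bounded region; as you yourself observe, gauge changes of the Liouville form cannot flip this direction either. The paper's argument is different and cleaner: if the local Liouville field points inward, deleting the bounded domain $U$ from $P\times\D$ produces a symplectically aspherical filling of the disconnected contact manifold $\partial(P\times\D)\sqcup\partial W$, and one then invokes the fact (from \cite{product} and \cite[Proposition B]{quotient}) that $\partial(P\times\D)$ is not co-fillable, i.e.\ admits no such filling with an extra boundary component. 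This eliminates the inward case outright and reduces to the first assertion.
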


For example, we can consider the flexible version $W^{flex}$ of a Brieskorn variety $\{x_0^{a_0}+\ldots+x_n^{a_n}=1\}\subset \C^{n+1}$ for $a_i\in \N_+$. When $a_i\ge 2$ for all $i$, the intersection form of the Brieskorn variety/$W^{flex}$ has a positive rank if one of $a_i\ge 3$ or $n$ is even. Then by \Cref{thm:emb}, there is no contact embedding of the contact boundary $\partial W^{flex}$ into $\C^n$. Note that for suitable choices of $a_i$, the contact boundary $\partial W^{flex}$ has the same almost contact structure as $(S^{2n-1},\xi_{\std})$ \cite{MR3483060}, in particular, such embedding is not obstructed topologically.


\subsection*{Organization of the paper}
We recall the basics of flexible Weinstein domains, symplectic cohomology, and neck-stretching in \S \ref{s2}. In \S \ref{s3}, we compare different notions of ADC and topologically simple fillings and discuss situations where the topologically simple assumption holds automatically. We prove \Cref{thm:main} in \S \ref{s4} and applications in \S \ref{s5}.

\subsection*{Acknowledgments}
The author is grateful to Sheel Ganatra for sharing the draft of \cite{EGL} and helpful conversions, to Jonathan Bowden and Oleg Lazarev for enlightening discussions, and to the anonymous referee for many suggestions. The author is supported by the National Key R\&D Program of China under Grant No.\ 2023YFA1010500, the National Natural Science Foundation of China under Grant No.\ 12288201 and 12231010.

\section{Preliminaries on flexibly fillable manifolds and symplectic cohomology}\label{s2}
\subsection{Preliminaries on flexibly fillable contact manifolds}\label{ss21}
We assume throughout this note that $(Y,\xi)$ is a contact manifold of dimension $2n-1\ge 5$, such that $\xi$ is co-oriented and its first rational Chern class $c^{\Q}_1(\xi)$ is zero.
\begin{definition} We first recall definitions of symplectic fillings of various flavors:
	\begin{itemize}
		\item $(W,\lambda)$ is a Liouville filling of $(Y,\xi)$ iff $\rd \lambda$ is a symplectic form on $W$, the Liouville field $X_\lambda$, defined by $i_{X_\lambda} \rd \lambda=\lambda$, is outward transverse along $\partial W$, and $(\partial W, \ker \lambda|_{\partial W})$ is contactomorphic to $(Y,\xi)$.  
		\item $(W,\lambda,\phi)$ is a Weinstein filling of $(Y,\xi)$ iff $(W,\lambda)$ is a Liouville filling, $\phi:W\rightarrow \mathbb{R}$ is a Morse function with maximal level set $\partial W$, and $X_\lambda$ is a gradient-like vector field for $\phi$.
		\item $(W,\lambda,\phi)$ is a flexible Weinstein filling of $(Y,\xi)$ iff $(W,\lambda,\phi)$ is a Weinstein filling, and there exist regular values  $c_1<\min \phi<c_2<\ldots<c_k<\max \phi$  of $\phi$, such that there are no critical values in $[c_k,\max \phi]$ and each $\phi^{-1}([c_i,c_{i+1}])$ is a Weinstein cobordism with a single critical point $p$ whose attaching sphere $\Lambda_p$ is either subcritical or a loose Legendrian in $(\phi^{-1}(c_i), \lambda|_{\phi^{-1}(c_i)})$, see \cite[Definition 11.29]{MR3012475} and \cite[Definition 2.4]{lazarev2016contact}.
	\end{itemize}
\end{definition}

Let $(Y,\xi)$ be a contact manifold such that the rational first Chern class $c_1^{\Q}(\xi)\in H^2(Y;\Q)$ is zero. Therefore there exists $N\in \N_+$, such that $Nc_1(\xi)=0\in H^2(Y;\Z)$. Since $c_1(\det_{\C} \oplus^N\xi)=Nc_1(\xi)=0$, $\det_{\C} \oplus^N\xi$ can be trivialized, where we view $\xi$ as a complex bundle using a complex structure compatible with $\rd \alpha|_{\xi}$. Then we can assign a rational Conley-Zehnder index $\mu_{\CZ}$ to each non-degenerate Reeb orbit $\gamma$, i.e.\ $1/N$ of the Conley-Zehnder index of $\oplus^N \Phi(t)$ under a symplectic trivialization of $\oplus^N \gamma^*\xi$ inducing the fixed trivialization on $\det_{\C} \oplus^N\xi$, where $\Phi(t)$ is the linearization of the Reeb flow along $\gamma$ restricted to $\xi$, see \cite{gironella2021exact,Reeb} for details. For those orbits with torsion homology classes, the Conley-Zehnder index is independent of $N$ and the trivialization \cite[Proposition 3.8]{gironella2021exact}. For contractible Reeb orbits, it is the usual integer-valued Conley-Zehnder index using any induced trivialization from a bounding disk.

In \cite{lazarev2016contact}, Lazarev introduced the concept of asymptotically dynamically convex (ADC) manifolds, and in the process of proving flexible fillable contact manifolds are ADC, he proved the following property:

\begin{proposition}\label{prop:reeb}
	Let $(Y^{2n-1},\xi)$ be a flexibly fillable contact manifold with $c^{\Q}_1(\xi)=0$ and a fixed contact form $\alpha_0$. Then for any $D \gg 0$, there exists a contact form $\alpha_D<\alpha_0$ such that Reeb orbits of $\alpha$ with period smaller than $D$ are non-degenerate and have Conley-Zehnder index $\ge 1$ (for any fixed trivialization of $\det_{\C} \oplus^N\xi$ ). And those orbits with Conley-Zehnder index $1$ are simple.
\end{proposition}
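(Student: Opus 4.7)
The plan is to follow Lazarev's handle-by-handle construction of contact forms on the boundary of a Weinstein domain built from subcritical and flexible handles, and to verify that at each stage the Reeb orbits created below any prescribed action threshold can be controlled in both Conley-Zehnder index and multiplicity. Since $(Y,\xi)$ is flexibly fillable, fix a Weinstein handle decomposition $W_0=W_0^{(0)}\cup h_1\cup\cdots\cup h_m$ where each $h_i$ is either subcritical (index $<n$) or a flexible $n$-handle (critical, with loose attaching Legendrian). Writing $Y_i=\partial W_0^{(i)}$, I would build $\alpha$ on $Y=Y_m$ inductively: assume I have a contact form on $Y_{i-1}$ for which all Reeb orbits of period $<D_i$ (for some large constant depending on $D$) are non-degenerate with $\CZ\geq 1$ and those of $\CZ=1$ are simple, and I want to produce such a form on $Y_i$ for the next handle attachment.

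First, for a subcritical handle $h_i$ of index $k<n$, the attaching isotropic sphere lies in a standard contact neighborhood, and the attachment produces new Reeb orbits that, in the model of Cieliebak-Eliashberg, are concentrated near the core of the handle and have $\CZ$ shift of order $n-k\geq 1$ as multiples of an explicit generator. I would quote the handle model computations to conclude that the new orbits below any chosen action window are non-degenerate with $\CZ\geq 1$, and that $\CZ=1$ orbits, corresponding to the simplest iterates in the handle core, are simple. The old orbits inherited from $Y_{i-1}$ survive (possibly with small perturbation) and keep their indices.

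Second, for a flexible (critical) $n$-handle $h_i$ with loose attaching Legendrian $\Lambda$, this is the crucial case: the looseness of $\Lambda$ allows, after stabilization inside the loose chart, a large deformation of the contact form along $\Lambda$. Lazarev's key observation is that one can arrange the attaching Legendrian so that the Reeb chords of $\Lambda$ of length $<D$ are all non-degenerate with $\CZ\geq 1$, and so that the unique chords of $\CZ=1$ correspond to the ``tip'' chords in the loose chart, which are simple. Passing through the handle attachment, each such chord of $\Lambda$ becomes a Reeb orbit on $Y_i$ whose Conley-Zehnder index equals (roughly) the index of the chord shifted by a contribution from the handle model; this shift is nonnegative, so the $\CZ\geq 1$ bound is preserved, and the simplicity of $\CZ=1$ orbits reduces to simplicity of the corresponding chords, which was arranged in the loose chart.

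The main obstacle, and the point where I expect to spend the most care, is the simplicity claim for $\CZ=1$ orbits: multiple covers of very short Reeb orbits on $Y_{i-1}$ could a priori land in the $\CZ=1$ window on $Y_i$ after handle attachment, and similarly iterated chords of $\Lambda$ could produce non-simple $\CZ=1$ orbits. To rule this out, I would first choose the contact form on $Y_{i-1}$ so that all periods of simple orbits are bounded below by a constant much larger than what the target action window $D$ allows for non-trivial iterates of index $1$, invoking $c_1^{\Q}(\xi)=0$ so that iterates of an index $\geq 2$ simple orbit keep $\CZ\geq 2$. Combined with the loose-chart model, where the $\CZ=1$ chords can be explicitly exhibited as geometrically simple, this gives the required multiplicity control. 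Letting $D\to\infty$ and rescaling the resulting contact form so that it satisfies $\alpha<\alpha_0$ yields the proposition.
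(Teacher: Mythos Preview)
Your overall approach---an inductive handle-by-handle appeal to Lazarev's construction---is exactly what the paper does; its proof is essentially a citation of \cite[Theorems 3.15, 3.17, 3.18]{lazarev2016contact} together with a two-line summary of the outcome. However, a few points in your sketch are off and would cause trouble if you tried to carry out the details.

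First, for a subcritical handle of index $k$ the primitive orbit has Conley--Zehnder index $n+1-k$, not a ``shift of order $n-k$''; since $k\le n-1$ this is already $\ge 2$, and in the explicit handle model all its multiple covers have strictly larger index. So subcritical handles contribute no $\CZ=1$ orbits whatsoever, and your simplicity discussion for that case is vacuous. The $\CZ=1$ orbits arise \emph{only} from the flexible (critical) handle attachments, where Lazarev's zig-zag/stabilization argument produces several simple contractible orbits of index exactly $1$ together with other orbits of strictly larger index; the simplicity is built into the chord-to-orbit correspondence there, not deduced after the fact.

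Second, your proposed mechanism for excluding non-simple $\CZ=1$ orbits---``invoking $c_1^{\Q}(\xi)=0$ so that iterates of an index $\ge 2$ simple orbit keep $\CZ\ge 2$''---is false in general: vanishing first Chern class only makes the index well defined, it says nothing about how it behaves under iteration (an index-$2$ orbit can have an iterate of index $1$). The reason iterates stay above $1$ here is the explicit form of the linearized return map in the handle model, where the mean index is large and positive, not any abstract Chern-class argument. Once you replace that step with a direct appeal to the handle-model computation, your outline coincides with the paper's.
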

\begin{proof}
	This follows from the proof of \cite[Theorem 3.15, 3.17, 3.18]{lazarev2016contact}. For $D \gg 0$ and a suitable $\alpha<\alpha_0$, if we consider Reeb orbits of period smaller than $D$, then they fall into the following two classes: (1) Each subcritical handle of index $k$ creates a simple contractible Reeb orbit with Conley-Zehnder index $n+1-k$, and all multiple covers of it have higher Conley-Zehnder indices; (2) Every loose handle attachment creates (several) contractible simple Reeb orbits of Conley-Zehnder index $1$ and many other orbits with Conley-Zehnder index strictly greater than $1$. 
\end{proof}

\subsection{Symplectic cohomology}\label{s22}
We will only recall the basics of symplectic cohomology to set up notations and relevant structures for our main results. We refer readers to \cite{cieliebak2018symplectic,ritter2013topological,seidel2006biased} for a more complete treatment of the subject.

By a strict contact manifold, we mean a contact manifold $Y$ equipped with a contact form $\alpha$ and an exact filling $(W,\lambda)$ is a strict exact filling $(Y,\alpha)$ if $\alpha=\lambda|_Y$. Given an exact filling $(W,\lambda)$, we consider the completion $(\widehat{W},\widehat{\lambda})$, where $\widehat{W}=W\cup (1,\infty)_r\times \partial W$, $\widehat{\lambda}$ is $\lambda$ on $W$ and $r(\lambda|_{\partial W})$ on $(1,\infty)_r\times \partial W$. Roughly speaking, symplectic cohomology is the ``Morse cohomology" of the free loop space w.r.t.\ the symplectic action functional
\begin{equation}\label{eqn:action}
    \cA_H(x):=-\int x^*\widehat{\lambda} +\int_{S^1} (x^*{H})\rd t
\end{equation}
where $H:S^1\times \widehat{W}\to \R$ is a Hamiltonian on the completion such that ``the slope $\frac{\partial }{\partial r}H$" goes to infinity as $r$ goes to infinity. Let $R$ be any commutative ring, the cochain complex $CF^*(H)$ is a free $R$-module generated by $1$-periodic orbits of $X_H$ when $H$ is non-degenerate. 
Let $x,y$ be two generators in $CF^*(H)$, we use $\cM^{x}_{y}$ to denote the compactified moduli space of Floer cylinders from $x$ to $y$:
$$\cM^x_{y}=\overline{\left\{u:\R_s\times S^1_t\to \widehat{W}\left|\partial_su+J(\partial_tu-X_H)=0, \displaystyle \lim_{s\to \infty} u = x, \lim_{s\to-\infty} u=y\right. \right\}/\R }.$$

We say an almost complex structure $J$ on $\widehat{W}$ is admissible, if $J$ is compatible with $\rd \widehat{\lambda}$ and $J$ is cylindrically convex for $r\ge B \gg 0$ so that the integrated maximum principle \cite{MR2602848,cieliebak2018symplectic}) holds, where by cylindrically convex, we mean $J$ preserves $\xi$ and $J(r\partial_r)=R$ on $[B,\infty)_r\times \partial W$ with $\xi=\ker (\lambda|_{\partial W})$ and $R$ the Reeb vector field of $(\partial W , \lambda|_{\partial W})$. In both non-degenerate and Morse-Bott cases,  for a generic $S^1$-dependent admissible complex structure $J:S^1\to \End(T^*\widehat{W})$, $\cM^{x}_{y}$ is cut out transversely as a manifold with boundary for those with expected dimension $\virdim \cM_{x,y}\le 1$. Moreover, $\cM^{x}_{y}$ can be oriented in a coherent way such that 
$$\delta(x)=\sum_{y,\virdim \cM^{x}_{y}=0}(\# \cM^{x}_{y}) y$$
defines a differential on $CF^*(H)$.

The symplectic cochain complex is graded by $n-\mu_{\CZ}(x)$. In general, since $\mu_{\CZ}$ is only well-defined in $\Z/2$, symplectic cohomology always has a $\Z/2$ grading. If $c_1(W)=0$, upon fixing a trivialization of $\det_{\C} TW$, $\mu_{\CZ}$ is well-defined in $\Z$, which is independent of the trivialization for any periodic orbit with finite order homology class. Moreover, if $c^{\Q}_1(W)=0$, then  $\mu_{\CZ}$ is well-defined in $\Q$ (as in \S \ref{ss21}) if we fix a trivialization of $\det_{\C}\oplus^N TW$ and symplectic cohomology can be graded by $\Q$. We say an exact filling $W$ of $Y$ is \emph{topologically simple} if $H_1(Y;\Q)\to H_1(W;\Q)$ is injective and $c_1^{\Q}(W)=0$. 

\begin{proposition}\label{prop:trivialization}
    Assume $W$ is topologically simple, then there exists $N,M\in \N_+$, such that $\det_{\C}\oplus^N\xi$ can be trivialized, and the induced trivialization on $\det_{\C}\oplus^{NM}\xi$ from any trivialization of $\det_{\C}\oplus^N \xi$ can be extended to a trivialization of $\det_{\C} \oplus^{NM}TW$.
\end{proposition}
\begin{proof}
    Since $c_1^{\Q}(W)=0$, we may assume $Nc_1(W)=0\in H^2(W;\Z)$ for $N\in \N_+$. Therefore we have a trivialization of $\det_{\C} \oplus^N TW$, which induces an trivialization of $\det_{\C}\oplus^N\xi$. Now, different trivializations of $\det_{\C}\oplus^N\xi$ are parameterized the gauge group $[Y:S^1]=H^1(Y;\Z)$. Since $H_1(Y;\Q)\to H_1(W;\Q)$ is injective, we have $H^1(W;\Q)\to H^1(Y;\Q)$ is surjective and hence $H^1(W;\Z)\to H^1(Y;\Z)$ is surjective onto $M\cdot H^1(Y;\Z)\subset H^1(Y;\Z)$ for some $M\in \N_+$. As the difference between two induced trivializations on $\det_{\C}\oplus^{NM}\xi$ from two trivializations of $\det_{\C}\oplus^N \xi$ is given by en element in $M\cdot H^1(Y;\Z)$, the claim follows.
\end{proof}

\subsection{Cascades model for the Morse-Bott case}\label{ss23}
It is often convenient to consider symplectic cohomology using Morse-Bott Hamiltonians. In this paper, we will consider the following two special Morse-Bott scenarios:
\begin{enumerate}
    \item $H=0$ on $W$\footnote{Strictly speaking, when $H=0$ on $W$, it is not Morse-Bott non-degenerate along the boundary $W$. However, this failure of Morse-Bott non-degeneracy does not matter in the cascades construction by \cite[Proposition 2.6]{zhou2019symplectic}.};
    \item $H=h(r)$ on  $ (1,\infty)_r \times \partial W $ with $h'(r)=D$ for $r\gg 0$ and $h''(r)>0$ unless $h=0$ or $h'(r)=D$. 
\end{enumerate}
The purpose of introducing the cacades model for the Morse-Bott case is twofold: (1) when $H=0$ on $W$, it is cleaner to apply neck-stretching; (2) when the $H$ is autonomous, the $1$-periodic Hamiltonian orbits form $S^1$ Morse-Bott family and we can use non-negativity of the contact energy in \Cref{prop:SFT2} to constrain Floer cylinders.

We choose a Morse function $g$ on $W$, such that $\partial_r g>0$ on $\partial W$.  We use $\overline{\gamma}$ to denote the $S^1$ family of Hamiltonian orbits corresponding to the Reeb orbit $\gamma$. Then we pick two different generic points $\hat{\gamma}$ and $\check{\gamma}$ on $\Ima \overline{\gamma}$, this is equivalent to choosing a Morse function $g_{\overline{\gamma}}$ with one maximum and one minimum on $\Ima \overline{\gamma}$ in \cite[\S 3]{bourgeois2009symplectic}. By \cite[Lemma 3.4]{bourgeois2009symplectic}, the Morse function $g_{\overline{\gamma}}$ can be used to perturb the Hamiltonian $H$ to get two non-degenerate orbits from $\overline{\gamma}$, which are often denoted by $\hat{\gamma}$ and $\check{\gamma}$ in literatures with $\mu_{\CZ}(\hat{\gamma})=\mu_{\CZ}(\gamma)+1$ and $\mu_{\CZ}(\check{\gamma})=\mu_{\CZ}(\gamma)$.   

Then we have a Floer cochain complex $CF^*(H)$, which is a free $R$-module generated by critical points of $g$ with Morse index as grading, and two generators $\hat{\gamma},\check{\gamma}$ for each Reeb orbit $\gamma$ of period smaller than $D$, with gradings $n-\mu_{\CZ}(\gamma)-1$ and $n-\mu_{\CZ}(\gamma)$ (interpreted as in $\Z/2$,  $\Z$ or $\Q$ depending on the situation). The differential is defined by counting rigid cascades \cite[(39),(40)]{bourgeois2009symplectic}. 
In this case, $\cM^{x}_{y}$ is the compactified moduli space of \emph{cascades} from $x$ to $y$, which can be pictorially described as 
    \begin{figure}[H]\label{fig:cascades}
	\begin{tikzpicture}
	\draw (0,0) to [out=90, in = 180] (0.5, 0.25) to [out=0, in=90] (1,0) to [out=270, in=0] (0.5,-0.25)
	to [out = 180, in=270] (0,0) to (0,-1);
	\draw[dotted] (0,-1) to  [out=90, in = 180] (0.5, -0.75) to [out=0, in=90] (1,-1);
	\draw (1,-1) to [out=270, in=0] (0.5,-1.25) to [out = 180, in=270] (0,-1);
	\draw (1,0) to (1,-1);
	\draw[->] (1,-1) to (1.25,-1);
	\draw (1.25,-1) to (1.5,-1);
	\draw (1.5,-1) to [out=90, in = 180] (2, -0.75) to [out=0, in=90] (2.5,-1) to [out=270, in=0] (2,-1.25)
	to [out = 180, in=270] (1.5,-1) to (1.5,-2);
	\draw[dotted] (1.5,-2) to  [out=90, in = 180] (2, -1.75) to [out=0, in=90] (2.5,-2);
	\draw (2.5,-2) to [out=270, in=0] (2,-2.25) to [out = 180, in=270] (1.5,-2);
	\draw (2.5,-1) to (2.5,-2);
	\draw[->] (2.5,-2) to (2.75,-2);
	\draw (2.75,-2) to (3,-2);
	\draw[->] (-0.5,0) to (-0.25,0);
	\draw (-0.25,0) to (0,0);
	\node at (0.5,-0.5) {$u_1$};
	\node at (2, -1.5) {$u_2$};
	\fill (-0.5,0) circle[radius=1pt];
	\node at (-0.7, 0) {$x$};
	\fill (3,-2) circle[radius=1pt];
	\node at (3.2, -2) {$y$};
	\end{tikzpicture}
	\begin{tikzpicture}
	\fill (-0.5,0) circle[radius=1pt];
	\node at (-0.7, 0) {$x$};
	\draw (0,0) to [out=90, in = 180] (0.5, 0.25) to [out=0, in=90] (1,0) to [out=270, in=0] (0.5,-0.25)
	to [out = 180, in=270] (0,0) to (0,-1);
	\draw[dotted] (0,-1) to  [out=90, in = 180] (0.5, -0.75) to [out=0, in=90] (1,-1);
	\draw (1,-1) to [out=270, in=0] (0.5,-1.25) to [out = 180, in=270] (0,-1);
	\draw (1,0) to (1,-1);
	\draw[->] (1,-1) to (1.25,-1);
	\draw (1.25,-1) to (1.5,-1);
	\draw (1.5,-1) to [out=90, in = 180] (2, -0.75) to [out=0, in=90] (2.5,-1) to [out=270, in=0] (2,-1.25)
	to [out = 180, in=270] (1.5,-1) to (1.5,-2);
	\draw (2.5,-2) to [out=270, in=0] (2,-2.5) to [out = 180, in=270] (1.5,-2);
	\draw (2.5,-1) to (2.5,-2);
	\draw[->] (-0.5,0) to (-0.25,0);
	\draw (-0.25,0) to (0,0);
	\draw[->] (2,-2.5) to (2.5,-2.5);
	\draw (2.5,-2.5) to (3,-2.5);
	\fill (3,-2.5) circle[radius=1pt];
	\node at (3.2,-2.5) {$y$};
    \node at (2.7,-2.8) {$\nabla g$};
	\node at (0.5,-0.5) {$u_1$};
	\node at (2, -1.5) {$u_2$};
	\end{tikzpicture}
	\caption{$2$ level cascades}\label{fig:cascades}
\end{figure}
\begin{enumerate}
	\item Each unlabeled horizontal arrow is a \emph{negative} gradient flow of $g_{\overline{\gamma}}$ in $\Ima \overline{\gamma}$, i.e.\ flow toward $\check{\gamma}$.
    \item The labeled horizontal arrow is a \emph{positive} gradient flow of $\nabla g$ using a generic metric.
	\item $u$ is a solution to the Floer equation $\partial_s u+J_t(\partial_tu-X_H)=0$ modulo $\R$ translation.
	\item Every intersection point of the line with the surface satisfies the obvious matching condition.
\end{enumerate}
More formally, the differential is defined by counting the following compactified moduli spaces.
\begin{enumerate}
	\item For $p,q\in Crit(g)$, 
	$$\cM^{p}_{q}:=\overline{\{ \gamma:\R_s \to W|\gamma'+\nabla g =0, \lim_{s\to \infty} \gamma=p,\lim_{s\to -\infty} \gamma=q \}/\R}.$$
	\item For $\gamma_+,\gamma_- \in \{\check{\gamma},\hat{\gamma}| \forall S^1 \text{ family of orbits } \overline{\gamma}\}$, a $k$-cascade from $\gamma_+$ to $\gamma_-$ is a tuple $(u_1,l_1,\ldots,l_{k-1},u_k)$, such that
	\begin{enumerate}
		\item $l_i$ are positive real numbers.
		\item nontrivial $u_i\in \{u:\R_s\times S^1_t\to \widehat{W}| \partial_su+J_t(\partial_tu-X_H)=0, \displaystyle \lim_{s\to \infty} u \in \overline{\gamma}_{i-1}, \displaystyle \lim_{s\to -\infty} u \in \overline{\gamma}_{i}\}/\R$ such that $\gamma_+\in \overline{\gamma}_0$ and $\gamma_-\in \overline{\gamma}_{k}$, where the $\R$ action is the translation on $s$.
		\item $\phi_{-\nabla g_{\overline{\gamma}_i}}^{l_i}(\displaystyle\lim_{s\to -\infty}u_i(s,0))=\lim_{s\to \infty}u_{i+1}(s,0)$ for $1\le i\le k-1$, $\gamma_+=\displaystyle\lim_{t\to \infty}\phi^{-t}_{-\nabla g_{\overline{\gamma}_0}}(\lim_{s\to \infty} u_1(s,0))$, and $\gamma_-=\displaystyle\lim_{t\to \infty}\phi^{t}_{-\nabla g_{\overline{\gamma}_k}}(\lim_{s\to -\infty} u_k(s,0))$, where $\phi^t_{-\nabla g_{\overline{\gamma}}}$ is the time $t$ flow of $-\nabla g_{\overline{\gamma}}$ on $\Ima \overline{\gamma}$.
	\end{enumerate}
	Then we define $\cM^{\gamma_+}_{\gamma_-}$ to be the compactification of the space of all cascades from $\gamma_+$ to $\gamma_-$. The compactification involves the usual Hamiltonian-Floer breaking of $u_i$ as well as degeneration corresponding to $l_i=0,\infty$.  The $l_i=0$ degeneration is equivalent to a Hamiltonian-Floer breaking $\displaystyle\lim_{s\to -\infty}u_i=\lim_{s\to \infty}u_{i+1}$. In particular, they can be glued or paired, hence do not contribute (algebraically) to the boundary of $\cM^{\gamma_+}_{\gamma_-}$. The $l_i=\infty$ degeneration is equivalent to a Morse breaking for $g_{\overline{\gamma}_i}$, which will contribute to the boundary of $\cM^{\gamma_+}_{\gamma_-}$.

	\item For $\gamma_+ \in  \{\check{\gamma},\hat{\gamma}| \forall S^1 \text{ family of orbits } \overline{\gamma}\}$ and $q\in Crit(g)$, a $k$-cascades from $\gamma_+$ to $q$ is a tuple  $(l_0,u_1,l_1,\ldots,u_k,l_{k})$ as before, except
	\begin{enumerate}
		\item $u_k\in \{u:\C \to \widehat{W}|\partial_s u+J_t(\partial_tu-X_H)=0,\displaystyle\lim_{s\to \infty} u \in \overline{\gamma}_{k-1}, u(0)\in W^{\circ}\}/\R$, where we use the identification $\R\times S^1 \to \C^*, (s,t)\mapsto e^{2\pi(s+it)}$. $W^{\circ}$ is the interior of $W$, where the Floer equation is $\overline{\partial}_J u=0$, hence the removal of singularity implies that $u(0)$ is a well-defined notation.
		\item $q=\displaystyle\lim_{t\to \infty}\phi^t_{\nabla g}(u_k(0))$.
	\end{enumerate}
    Then $\cM^{\gamma_+}_{q}$ is defined to be the compactification of the space of all cascades from $\gamma_+$ to $q$. 
\end{enumerate}
For generic choices of almost complex structures, $\cM^{x}_{y}$ is cut out transversely whenever the virtual dimension is at most $1$ and $\cM^{x}_{y}$ can be oriented in a coherent way such that 
$$\delta(x)=\sum_{y,\virdim \cM^{x}_{y}=0}(\# \cM^{x}_{y}) y$$
defines a differential on $CF^*(H)$ as before.

The symplectic cohomology $SH^*(W;\R)$ is defined to be $\displaystyle\lim_{D\to \infty}H^*(CF^*(H),\delta)$ for either the non-degenerate case or Morse-Bott case, where the limit is taken w.r.t.\ the continuation maps.

\begin{remark}
    The construction above is a mixture of \cite[\S 2]{filling} for $H=0$ on $W$ and \cite{bourgeois2009symplectic} for autonomous Hamiltonians, and is a special case of the construction in \cite[\S 4.1]{divisor} where more general Morse-Bott families were considered.
\end{remark}

\subsection{Properties of symplectic cohomology}\label{ss24}
Symplectic cohomology $SH^*(W;R)$ has the following properties:
\begin{enumerate}
    \item\label{1} If we choose $H$ to be $C^2$ small and non-degenerate on $W$ and to be $h(r)$ on $\partial W \times (1,\infty)_r$ with $h''(r)>0$, then the periodic orbits of $X_H$ are either constant orbits on $W$ or non-constant orbits on $\partial W \times (1,\infty)$, which, in pairs after a small perturbation, correspond to Reeb orbits on $(\partial W,\lambda|_{\partial W})$.  Those constant orbits generate a subcomplex corresponding to the cohomology of $W$, and those non-constant orbits generate a quotient complex $CF^*_+(H)$, whose cohomology is called the positive symplectic cohomology $SH^*_+(W;R)$. Then we have a tautological long exact sequence,
    \begin{equation}\label{eqn:LES}
        \ldots \to H^*(W;R)\to SH^*(W;R)\to SH^{*}_+(W;R)\stackrel{\delta}{\to} H^{*+1}(W;R)\to \ldots
    \end{equation}
    In the Morse-Bott setup, $CF^*_+(H)$ is generated by $\check{\gamma},\hat{\gamma}$ and critical points of $g$ compute the cohomology of $W$.
    \item $SH^*(W;R)$ is a unital ring and $H^*(W;R)\to SH^*(W;R)$ is a unital ring map.
    \item If we consider $h(r)$ with $h'(r)=D$ for $r\in (1+\epsilon,+\infty)$ in the setup in \eqref{1}, assume there is no Reeb orbit with period $D$, the Hamiltonian-Floer cohomology defines filtered symplectic cohomology $SH^{*,<D}(W;R)$ and $SH^{*,<D}_+(W;R)$ with a similar tautological long exact sequence. And we have
    $$SH^*(W;R)=\varinjlim_{D\to \infty} SH^{*,<D}(W;R), \quad SH^*_+(W;R)=\varinjlim_{D\to \infty} SH^{*,<D}_+(W;R).$$
    \item We consider the map $\delta_{\partial}:SH^*_+(W;R)\to H^{*+1}(W;R)\to H^{*+1}(\partial W;R)$, on the cochain level, this can be defined by counting rigid configurations in the right of \Cref{fig:cascades} with the bottom flow line replaced by a gradient flow line in $\partial W$ (using an auxiliary Morse function $g_{\partial}$ on $\partial W$) viewed as fiber product over $W$, i.e.\ the right side of \Cref{fig:def}.
    \begin{figure}[H] {\small
    \begin{overpic}[scale=0.8]
    {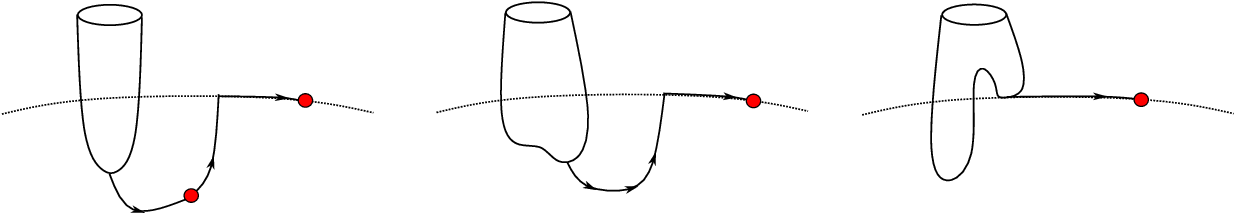}
    \put(2,8) {$\partial W$}
    \put (17,1) {$\in Crit(g)$}
    \put (25,7) {$\in Crit(g_{\partial})$}
    \put (20,10) {$\nabla g_{\partial}$}
    \put (18,6) {$\nabla g$}
    \put (50,5) {$\nabla g$}
    \put (11,-2) {$\nabla g$}
    \end{overpic}}
    \caption{Two descriptions of $\delta_{\partial}$ and the deformation}
    \label{fig:def}
    \end{figure} 
    \noindent
    The left side of \Cref{fig:def} is the composition of $SH^*_+(W;R)\to H^{*+1}(W;R)$ and $H^{*+1}(W;R)\to H^{*+1}(\partial W;R)$, which is homotopic to the right side through the middle deformation where the gradient flow lines of $\nabla g$ is defined on a finite interval,  see \cite[\S 3.1]{filling} for details. 
    \item For an exact subdomain $V\subset W$, i.e.\ $\lambda|_V$ makes $V$ into a Liouville domain, we have a Viterbo transfer map $SH^*(W;R)\to SH^*(V;R)$ compatible with all the structures, e.g.\ the tautological long exact sequence, the ring structure and so on. For the filtered version, we have 
    \begin{equation}\label{eqn:viterbo}
        SH^{*,<D}(W;R)\to SH^{*,<D}(V;R)
    \end{equation}
    see \cite[\S 5]{cieliebak2018symplectic}. 
\end{enumerate}

\subsection{Neck-stretching}
We first recall some basics of the neck-stretching procedure in \cite{bourgeois2003compactness}. We also recommend \cite[\S 2.3, 9.5]{cieliebak2018symplectic} for applications of neck-stretching in Floer theories.

We recall the setup of neck-stretching for the general case following \cite[\S 3.2]{zhou2019symplectic}.  Let $(W,\lambda)$ be an exact domain and $(Y,\alpha:=\lambda|_{Y})$ be a contact type hypersurface inside $W$.\footnote{The process works for strong filling $W$ as long as $Y$ is contact hypersurface.} The hypersurface divides $W$ into a cobordism $X$ union with a domain $W'$. Then we can find a small slice $([1-\eta,1+\eta]_r\times Y,\rd(r\alpha))$ symplectomorphic to a neighborhood of $Y$ in $W$. Assume $J|_{[1-\eta,1+\eta]_{r}\times Y}=J_0$, where $J_0$ is independent of $S^1$ and $r$ and $J_0(r\partial_{r})=R_\alpha,J_0\xi=\xi$ for $\xi:=\ker \alpha$. Then we pick a family of diffeomorphisms $\phi_R:[(1-\eta)e^{1-\frac{1}{R}}, (1+\eta)e^{\frac{1}{R}-1}]\to [1-\eta,1+\eta]$ for $R\in (0,1]$ such that $\phi_1=\Id$ and $\phi_R$ near the boundary is linear with slope $1$. Then the stretched almost complex structure $NS_{R}(J)$ is defined to be $J$ outside $[1-\eta,1+\eta]\times Y$ and is $(\phi_R\times \Id)_*J_0$ on $[1-\eta,1+\eta]\times Y$. Then $NS_{1}(J)=J$ and $NS_{0}(J)$ gives almost complex structures on the completions $\widehat{X}$, $\widehat{W'}$ and $\R_+\times Y$, which we will refer as the fully stretched almost complex structure.

We will consider the degeneration of curves solving the Floer equation with one positive cylindrical end asymptotic to a non-constant Hamiltonian orbit of $X_H$. Here we require that $H=0$ near the contact hypersurface $Y$. If the orbit is simple and $J$ is independent of $S^1$ or the orbit is multiply covered but $J$ depends on the $S^1$ coordinate near the orbit, the topmost curve in the SFT building, i.e.\ the curve in $\widehat{X}$, has the somewhere injectivity property. In particular, we can find regular $J$ on $\widehat{X}$ such that all relevant moduli spaces of the topmost curves, i.e.\ those in \S \ref{s4}. Note that we are not claiming the whole SFT building will be cut out transversely, just the top level which enjoys the somewhere injectivity.

\begin{figure}[H]
		\begin{tikzpicture}[scale=0.5]
		\path [fill=blue!15] (0,0) to [out=20, in=160]  (6,0) to [out=270,in=90] (6,-6) to [out=170,in=10] (0,-6) to [out=90, in=270] (0,0);
		\path [fill=red!15] (0,-6) to [out=10, in=170]  (6,-6) to [out=270,in=0] (3,-10) to [out=180,in=270] (0,-6); 
		\draw (0,0) to [out=20, in=160]  (6,0) to [out=270,in=90] (6,-6) to [out=170,in=10] (0,-6) to [out=90, in=270] (0,0);
		\draw (6,-6) to [out=270,in=0] (3,-10) to [out=180,in=270] (0,-6); 
		\draw[dashed] (0, -5.5) to [out=10, in=170] (6,-5.5);
		\draw[dashed] (0.02,-6.5) to [out=10, in=170] (5.98,-6.5);
		\draw[->] (1,-1) to (1.5,-1);
		\draw (1.5,-1) to (2,-1);
		\draw (2,-1) to [out=90, in=180] (2.5, -0.75) to [out=0, in = 90] (3,-1) to [out=270,in=0] (2.5,-1.25) to [out=180,in=270] (2,-1);
		\draw (2,-1) to [out=270,in=90] (1,-7) to [out=270,in=180] (2,-8) to [out=0,in=180](2.5,-3) to [out=0, in=90](3,-4);
		\draw (3,-1) to [out=270, in=90] (4,-4) to [out=270, in=0] (3.5,-4.25) to [out=180,in=270] (3,-4);
		\draw[dotted] (3,-4) to [out=90, in=180] (3.5,-3.75) to [out=0, in=90] (4,-4);
		\draw[->] (4,-4) to (4.25,-4);
		\draw (4.25,-4) to (4.5,-4);
		\draw (4.5,-4) to [out=90, in=180] (5,-3.75)  to [out=0, in=90] (5.5,-4) to [out=270,in=0] (5,-4.25) to [out=180,in=270] (4.5,-4);
		\draw (4.5,-4) to (4.5,-5);
		\draw (5.5,-4) to (5.5,-5);
		\draw[dotted]  (4.5,-5) to [out=90, in=180] (5,-4.75)  to [out=0, in=90] (5.5,-5);
		\draw (5.5,-5) to [out=270,in=0] (5,-5.25) to [out=180,in=270] (4.5,-5);
		\draw[->] (5.5,-5) to (5.75,-5);
		\draw (5.75,-5) to (5.8,-5);
		\node at (2.5,-2) {$u_1$};
		\node at (5,-4.6) {$u_2$};
		\end{tikzpicture}
		\hspace{1cm}
		\begin{tikzpicture}[xscale=0.5,yscale=0.7]
		\path [fill=blue!15] (0,-1) to [out=20, in=160]  (6,-1) to [out=270,in=90] (6,-6) to [out=170,in=10] (0,-6) to [out=90, in=270] (0,-1);
		\path [fill=red!15] (0,-6) to [out=10, in=170]  (6,-6) to [out=270,in=0] (3,-10) to [out=180,in=270] (0,-6); 
		\draw (0,-1) to [out=20, in=160]  (6,-1) to [out=270,in=90] (6,-6) to [out=170,in=10] (0,-6) to [out=90, in=270] (0,-1);
		\draw (6,-6) to [out=270,in=0] (3,-10) to [out=180,in=270] (0,-6);
		\draw[dashed] (0, -5.5) to [out=10, in=170] (6,-5.5);
		\draw[dashed] (0.1, -6.5) to [out=10, in=170] (5.9,-6.5);
		\draw[->] (1,-2) to (1.5,-2);
		\draw (1.5,-2) to (2,-2);
		\draw (2,-2) to [out=90, in=180] (2.5, -1.8) to [out=0, in = 90] (3,-2) to [out=270,in=0] (2.5,-2.2) to [out=180,in=270] (2,-2);
		\draw (2,-2) to [out=270,in=90] (1,-7) to [out=270,in=180] (2,-8) to [out=0,in=180](2.5,-3.5) to [out=0, in=90](3,-4);
		\draw (3,-2) to [out=270, in=90] (4,-4) to [out=270, in=0] (3.5,-4.2) to [out=180,in=270] (3,-4);
		\draw[dotted] (3,-4) to [out=90, in=180] (3.5,-3.8) to [out=0, in=90] (4,-4);
		\draw[->] (4,-4) to (4.25,-4);
		\draw (4.25,-4) to (4.5,-4);
		\draw (4.5,-4) to [out=90, in=180] (5,-3.75)  to [out=0, in=90] (5.5,-4)  to [out=270,in=0] (5,-4.25) to [out=180,in=270] (4.5,-4);
        \draw (4.5,-4) to (4.5,-5);
		\draw (5.5,-4) to (5.5,-5);
		\draw[dotted]  (4.5,-5) to [out=90, in=180] (5,-4.75)  to [out=0, in=90] (5.5,-5);
		\draw (5.5,-5) to [out=270,in=0] (5,-5.25) to [out=180,in=270] (4.5,-5);
		\draw[->] (5.5,-5) to (5.75,-5);
		\draw (5.75,-5) to (5.8,-5);
		\node at (2.5,-3) {$u_1$};
		\node at (5,-4.5) {$u_2$};
		\end{tikzpicture}
		\hspace{1cm}
		\begin{tikzpicture}[scale=0.5]
		\path [fill=blue!15] (0.5,-6) to [out=90, in=270]  (0,0) to [out=20,in=160] (6,0) to [out=270,in=90] (5.5,-6) to [out=160, in=20] (0.5,-6);
		\path [fill=purple!15] (0.5,-6.2) to [out=20,in=160] (5.5,-6.2) to [out=270,in=90] (5.5,-12) to [out=160, in=20] (0.5,-12) to [out=90, in=270] (0.5,-6.2);
		\path [fill=red!15] (0.5, -12.2) to [out=20,in=160] (5.5,-12.2) to [out=270,in=0] (3,-16) to [out=180,in=270] (0.5,-12.2);
		\draw (0.5,-6) to [out=90, in=270]  (0,0) to [out=20,in=160] (6,0) to [out=270,in=90] (5.5,-6);
		\draw [dashed] (5.5,-6) to [out=160, in=20] (0.5,-6);
		\draw[dashed] (0.5,-6.2) to [out=20,in=160] (5.5,-6.2);
		\draw (5.5,-6.2) to [out=270,in=90] (5.5,-12);
		\draw[dashed] (5.5,-12) to [out=160, in=20] (0.5,-12);
		\draw (0.5,-12) to [out=90, in=270] (0.5,-6.2);
		\draw [dashed](0.5, -12.2) to [out=20,in=160] (5.5,-12.2); 
		\draw (5.5,-12.2)to [out=270,in=0] (3,-16) to [out=180,in=270] (0.5,-12.2);
		\draw[->] (1,-1) to (1.5,-1);
		\draw (1.5,-1) to (2,-1);
		\draw (2,-1) to [out=90, in=180] (2.5, -0.75) to [out=0, in = 90] (3,-1) to [out=270,in=0] (2.5,-1.25) to [out=180,in=270] (2,-1);
		\draw (2,-1) to [out=270,in=90] (1,-4) to [out=270, in=180]  (1.5,-5.8) to [out=0,in=270](2,-5) to [out=90,in=180] (2.5,-3) to [out=0, in=90](3,-4);
		\draw (3,-1) to [out=270, in=90] (4,-4) to [out=270, in=0] (3.5,-4.25) to [out=180,in=270] (3,-4);
		\draw[dotted] (3,-4) to [out=90, in=180] (3.5,-3.75) to [out=0, in=90] (4,-4);
		\draw[->] (4,-4) to (4.25,-4);
		\draw (4.25,-4) to (4.5,-4);
		\draw (4.5,-4) to [out=90, in=180] (4.9,-3.75)  to [out=0, in=90] (5.3,-4) to [out=270,in=0] (4.9,-4.25) to [out=1800,in=270] (4.5,-4);
		\draw (4.5,-4) to (4.5,-5);
		\draw (5.3,-4) to (5.3,-5);
		\draw[dotted] (4.5,-5) to [out=90, in=180] (4.9,-4.75)  to [out=0, in=90] (5.3,-5);
		\draw (5.3,-5) to [out=270,in=0] (4.9,-5.25) to [out=1800,in=270] (4.5,-5);
        \draw[->] (5.3,-5) to (5.5,-5);
		\draw (1.5,-5.9) to [out=180,in=90] (0.8,-9) to [out=270, in=180] (1.5,-11.8) to [out=0,in=270] (2.2,-9) to [out=90, in=0] (1.5,-5.9);
		\draw (1.5,-11.9) to [out=180,in=90] (0.8,-13) to [out=270, in=180] (1.5,-14) to [out=0,in=270] (2.2,-13) to [out=90, in=0] (1.5,-11.9);

		\node at (1.5,-5.8) [circle, fill=white, draw, outer sep=0pt, inner sep=3 pt] {};

		\node at (1.5,-11.8) [circle, fill=white, draw, outer sep=0pt, inner sep=3 pt] {};
		\node at (2.5,-2) {$u^{\infty}_1$};
		\node at (5,-4.7) {$u^{\infty}_2$};
		\node at (4.5,-2) {$\widehat{X}$};
		\node at (4,-10) {$Y\times \R_+$};
		\node at (4,-14) {$\widehat{W'}$};
		\end{tikzpicture}
    \caption{Neck-stretching for cascades model, the top level is contained in the blue region.}
    \label{fig}
\end{figure}
In the figure, we use $\bigcirc$ to indicate the puncture that is asymptotic to a Reeb orbit. For the compactification of curves in the topmost SFT level, in addition to the usual SFT building in the symplectization $\R_+\times Y$ stacked from below \cite{bourgeois2003compactness}, we also need to include Hamiltonian-Floer breakings near the cylindrical ends. If we use autonomous Hamiltonians and cascades,  we need to include curves with multiple cascades levels and their degeneration, e.g.\ $l_i=0,\infty$ in the cascades for some horizontal level $i$. A generic configuration is described in the top-right of the figure above, but we could also have more cascades levels with the connecting Morse trajectories degenerate to $0$ length or broken Morse trajectories.

A useful fact from the non-negativity of energy is the following action constraint. Let $u$ be a Floer cylinder in $\widehat{X}$ with negative punctures asymptotic to a multiset $\Gamma$ of Reeb orbits (i.e.\ a set of Reeb orbits with possible duplications). Assume $\displaystyle\lim_{s\to \infty} u = x$ and $\displaystyle\lim_{s\to -\infty} u = y$,  then we have
\begin{equation}\label{eqn:positive}
\cA_H(y)-\cA_H(x)-\sum_{\gamma\in \Gamma}\int \gamma^*\alpha \ge 0
\end{equation}
where $\cA_H$ is \eqref{eqn:action}, c.f.\ \cite[Lemma 2.4]{cieliebak2018symplectic}.

If we apply neck-stretching to the contact boundary $Y=\partial W$ in the completion $\widehat{W}$, assume $H=h(r)$ on $(0,+\infty) \times \partial W$ and we use the cascades model and a cylindrical convex almost complex structure, for a top-level curve as above, we get 
\begin{equation}\label{eqn:positive2}
\int \gamma_x^*\alpha -\int \gamma_y^*\alpha-\sum_{\gamma\in \Gamma}\int \gamma^*\alpha \ge 0
\end{equation}
where $\gamma_x,\gamma_y$ are the corresponding Reeb orbits for $x,y$. This follows from the non-negativity of $\int u^*\pi^*\rd \alpha$, where $\pi:\widehat{X}= (0,+\infty)\times Y\to Y$. 

\section{Topologically simple fillings}\label{s3}
Exploiting the independence of augmentations using grading constraints was initiated by Bourgeois and Oancea \cite{MR2471597}, also see the work of Cieliebak and Oancea \cite{cieliebak2018symplectic}, and Uebele \cite{MR4031538}, where they introduced the notion of index-positive contact manifolds (\Cref{rmk:ADC}). This notion was generalized by Lazarev \cite{lazarev2016contact} to the notion of asymptotically dynamically convex (ADC) manifolds to contain examples like flexibly fillable contact manifolds with vanishing first Chern class.

\begin{definition}[{\cite[Definition 3.6.]{lazarev2016contact}}]\label{def:ADC}
    Let $(Y^{2n-1},\xi)$ be a co-oriented contact manifold such that $c_1^{\Q}(\xi)=0$, we say $Y$ is ADC if there exist contact forms $\alpha_1\ge \alpha_2\ge \ldots$ and $0<D_1<D_2<\ldots \to \infty$, such that all contractible Reeb orbits of $\alpha_i$ with period at most $D_i$ are non-degenerate and $\mu_{\CZ}+n-3>0$.
\end{definition}
Several structural maps on ($S^1$-equivariant) symplectic cohomology of exact fillings of ADC manifolds are independent of topologically simple (a stronger topologically simple condition than the one used in this paper, namely $\pi_1(Y)\to \pi_1(W)$ is injective and $c_1(W)=0$) fillings \cite{filling,zhou2019symplectic}. Those topological conditions are used to get a $\Z$ grading for the symplectic cohomology generated by \emph{contractible orbits}, as the ADC condition only requires that $\mu_{\CZ}(\gamma)+n-3>0$ for a contractible Reeb orbit $\gamma$. To weaken the topological conditions, we need to use stronger ADC conditions. 
\begin{definition}\label{def:ADC'}
    Let $(Y^{2n-1},\xi)$ be a co-oriented contact manifold such that $c_1^{\Q}(\xi)=0$, we say $Y$ is ADC' if there exist contact forms $\alpha_1\ge \alpha_2\ge \ldots$ and $0<D_1<D_2<\ldots \to \infty$, such that all Reeb orbits of $\alpha_i$ with \emph{torsion homology class} and period at most $D_i$ are non-degenerate and $\mu_{\CZ}+n-3>0$.
\end{definition}
These different notions of ADC in \Cref{def:ADC,def:ADC'} result in slightly different theorems. For example, if we use ADC in \Cref{def:ADC}, then $SH_+^*(W)$ generated by \emph{contractible} Reeb orbits is independent of exact filling $W$ as long as $c_1(W)=0$ and $\pi_1(Y)\to \pi_1(W)$ is injective. However, if we use \Cref{def:ADC'}, then $SH_+^*(W)$ generated by Reeb orbits of \emph{torsion homology class} is independent of exact filling $W$ as long as $c_1^{\Q}(W)=0$ and $H_1(Y;\Q)\to H_1(W;\Q)$ is injective, i.e.\ topologically simple in this paper. This change can be applied to all results in \cite{filling,zhou2019symplectic}. Since flexibly fillable contact manifolds with torsion first Chern class is ADC' in the sense of \Cref{def:ADC'} by \Cref{prop:reeb}, therefore we use $c_1^{\Q}(W)=0$ and $H_1(Y;\Q)\to H_1(W;\Q)$ injective as the topologically simple condition in this paper and all results in \cite{filling,zhou2019symplectic} stated for flexibly fillable contact manifolds hold with such weaker topologically simple condition.

\begin{example}\label{ex:quotient}
    Let $G\subset U(n)$ acting freely on $S^{2n-1}$ and $n\ge 2$, since $G$ preserves the Euclidean metric and complex structure on $\C^n$, $G$ preserves the standard Liouville form $-\frac{1}{2}r\rd r\circ J$. This gives a contact structure $\xi_{\std}$ on $S^{2n-1}/G$, whose universal cover is the standard contact sphere $(S^{2n-1},\xi_{\std})$. Therefore $(S^{2n-1}/G,\xi_{\std})$ is always ADC in the sense of \Cref{def:ADC},  as contractible orbits of $(S^{2n-1}/G,\xi_{\std})$ are the same as those on $(S^{2n-1},\xi_{\std})$. However,  by \cite[Theorem A]{quotient}, there are no topological simple fillings in the stronger sense in \cite{filling} ($\pi_1$ injective and vanishing of the first Chern class)  of  $(S^{2n-1}/G,\xi_{\std})$.
    
    On the other hand, when we consider all Reeb orbits on $(S^{2n-1}/G,\xi_{\std})$, all of which are torsion. The highest minimal SFT degree is shown to be twice of the minimal discrepancy number of the singularity $\C^n/G$ whenever the latter is non-negative by \cite[Theorem 1.1.]{Reeb}. Therefore $(S^{2n-1}/G,\xi_{\std})$ is ADC' in the sense of \Cref{def:ADC'} if and only if the minimal discrepancy number of the singularity $\C^n/G$ is positive, i.e.\ $\C^n/G$ is terminal.

    For example, $(\mathbb{RP}^3,\xi_{\std})$ is ADC in the sense of \Cref{def:ADC} but is not ADC' in the sense of \Cref{def:ADC'}.  $(\mathbb{RP}^3,\xi_{\std})$ has two exact fillings, namely $T^*S^2$ and the \emph{orbifold} filling $\C^2/(\Z/2)$. Symplectic cohomology can be defined for exact orbifolds with similar properties, c.f.\ \cite{gironella2021exact}. $T^*S^2$ is topologically simple in this paper's sense, while $\C^2/(\Z/2)$ is topologically simple in the stronger sense in \cite{filling}, namely $\pi_1(\mathbb{RP}^3)\to \pi_1^{\mathrm{orb}}(\C^2/(\Z/2))$ is injective, where $\pi_1^{\mathrm{orb}}(\C^2/(\Z/2))$ is the orbifold fundamental group \cite[\S 2.5]{gironella2021exact}. Indeed, $T^*S^2$ and $\C^2/(\Z/2)$ are quite different from the symplectic cohomology perspective, namely $SH^*(T^*S^2;\Q)$ has infinite rank while $SH^*(\C^2/(\Z/2);\Q)=0$ \cite[Theorem B]{gironella2021exact}.

    When $n\ge 3$,  $(\mathbb{RP}^{2n-1},\xi_{\std})$ is ADC' in the sense of \Cref{def:ADC'}, motivated from \cite{quotient},  $(\mathbb{RP}^{2n-1},\xi_{\std})$ is expected to have an unique exact \emph{orbifold} filling when $n\ge 3$, namely $\C^n/(\Z/2)$.
\end{example}

\begin{remark}[Clarification on asymptotically dynamical convexity and dynamical convexity]\label{rmk:ADC}
Dynamical convexity was introduced by Hofer, Wysocki, and Zehnder \cite{MR1652928} as a dynamical consequence of convexity, namely a contact form $\alpha$ on $(S^{2n-1},\xi_{std})$ is dynamical convex if all Reeb orbits have Conley-Zehnder indices at least $n+1$. It was shown recently by Chaidez and Edtmair \cite{MR4438355,chaidez2022ruelle} that dynamical convexity does not imply convexity. In terms of cylindrical contact homology, dynamical convexity means that there are no Reeb orbits of small Conley-Zehnder indices that are not visible in the cylindrical contact homology. This is the perspective used by Abreu and Macarini in \cite{MR3694649} for contact manifolds other than spheres. On the other hand, Bourgeois, Cieliebak,  Oancea, and Uebele \cite{MR2471597,cieliebak2018symplectic,MR4031538} introduced index positivity of a contact form, namely if the SFT degree $\mu_{\CZ}+n-3$ of any contractible Reeb orbit is positive. For $(S^{2n-1},\xi_{std})$, this means that the Conley-Zehnder indices are at least $2-n \ne n+1$. From this perspective, the ADC condition is ``asymptotical" index positivity by considering a sequence of contact forms instead of just one.  
\end{remark}

To get results with even weaker topological assumptions, we need properties on Conley-Zehnder indices for general Reeb orbits. In particular, we have the following:
\begin{definition}\label{def:ADC-pi_1}
Let $(Y,\xi)$ be a contact manifold such that $c_1^{\Q}(\xi)=0$. Let $\Psi$ be a trivialization of $\det_{\C}\oplus^N\xi$ for some $N\in \N_+$. We say $(Y,\xi,\Psi)$ is \emph{$\pi_1$-sensitive ADC} if there exist contact forms $\alpha_1>\alpha_2>\ldots$, positive real numbers $D_1<D_2<\ldots$ converging to infinity, such that \emph{all} Reeb orbits of $\alpha_i$ of period up to $D_i$ are non-degenerate and have rational SFT grading $\mu_{\CZ}(\gamma)+n-3>0$. We say $(Y,\xi,\Psi)$ is \emph{$\pi_1$-sensitive TADC}, if in addition, there is a contact form $\alpha$ such that all $\alpha_i>\alpha$, where `T' stands for tamed \cite[Definition 6.2]{filling}. 
\end{definition}
Since the Conley-Zehnder index of Reeb orbits of torsion homology class is independent of the trivialization $\Psi$, we have, for any trivialization $\Psi$.
$$\pi_1\text{-sensitive ADC}\Rightarrow \text{ADC'} \Rightarrow \text{ADC}.$$
In general, if $H^1(Y;\Q)\ne 0$, then the notion of generalized ADC depends on the trivialization $\Psi$.

\begin{example}\label{ex:generalized_ADC}
We have the following examples of $\pi_1$-sensitive ADC contact manifolds.
\begin{enumerate}
   \item Let $G\subset U(n)$ such that the quotient $\C^m/G$ has an isolated singularity at $0$, the contact link $(S^{2n-1}/G,\xi_{\std})$ is $\pi_1$-sensitive ADC if and only if $\C^n/G$ is a terminal singularity by the work of McLean \cite{Reeb}, as explained in \Cref{ex:quotient}
    \item The contact boundary of a flexible Weinstein domain with vanishing rational first Chern class for any trivialization by the arguments in \cite{lazarev2016contact}. More precisely, the contact boundary of a subcritical Weinstein domain is generalized ADC as all of the relevant orbits can be assumed to be contractible. When we attach a flexible handle, non-contractible orbits could appear, however, the argument of lifting the Conley-Zehnder indices by adding zig-zags in \cite[Theorem 3.18]{lazarev2016contact} works for non-contractible orbits and any fixed trivialization. 
    \item For a closed manifold $Q$, we have that $\det_{\C} \oplus^2 TT^*Q$ is trivialized using the trivial real bundle $\det_{\R}\oplus^2TQ$. We use $\Psi$ to denote the trivialization. Then $(S^*Q,\Psi)$ is generalized ADC if $\dim Q\ge 4$, as the Conley-Zehnder index using such trivialization is the Morse index when the contact form is induced from a metric. This is an example where the notion of generalized ADC depends on $\Psi$, as changing $\Psi$ will increase the Conley-Zehnder indices of some orbits with nontrivial homotopy classes and decrease the same amount for orbits with the opposite homotopy classes, e.g.\ $T^*T^n$. The same holds for any closed orbifold $Q$ with only isolated singularities (then $S^*Q$ is a contact manifold).
    \item Let $V$ be a Liouville domain, such that $c_1^{\Q}(V)=0$, then $\partial (V\times \D)$ is generalized ADC for any trivialization by the proof of \cite[Theorem K]{filling}. 
\end{enumerate}
\end{example}

\begin{proof}[Proof of \Cref{prop:simple}]
If $c^\Q_1(W)=0$, then we can trivialize $\det \oplus^N TW$ for some $N\in \N_+$. Note that $H^1(W^{flex};\Z)\to H^1(Y;\Z)$ is an isomorphism, hence there is a trivialization of $\det_{\C} \oplus^N TW^{flex}$ whose restriction to $Y=\partial W^{flex}$ is the same as the restriction of the trivialization of $\det \oplus^N TW$. Since $Y$ is $\pi_1$-sensitive ADC for any trivialization, we then run the argument of \cite[Corollary B]{filling} using such trivializations and conclude the claim.
\end{proof}

\begin{proposition}\label{prop:spectral_sequence}
Assume $(Y,\xi,\Psi)$ is $\pi_1$-sensitive ADC and there is an exact filling $W$, such that $\Psi$ extends to a trivialization $\widetilde{\Psi}$ of $\det_{\C} \oplus^NTW$. Then for any exact filling $V$ of $Y$, there is a spectral sequence converging to $SH^*_+(V;R)$ (not graded), such that 
\begin{enumerate}
    \item The $(N+1)$th page of the spectral sequence is isomorphic to $SH^*_+(W;R;\widetilde{\Psi})$ (filtered by the $\Q$-grading using $\widetilde{\Psi}$) for any coefficient ring $R$.
    \item\label{SS_mor} There is a spectral sequence on the Morse cochain complex on $Y$, such that the cochain map $\delta_{\partial}$ from the positive cochain complex to the Morse cochain complex of $Y$ is compatible with the filtration/spectral sequence. On the $(N+1)$th page, the induced map is isomorphic to $SH^*_+(W;R;\widetilde{\Psi})\to H^{*+1}(Y;R)$ and the spectral sequence on the Morse cochain complex on $Y$ degenerates on the $(N+1)$th page.
\end{enumerate}
If  $(Y,\xi,\Psi)$ is $\pi_1$-sensitive TADC, the same holds for (semi-positive) strong fillings $V,W$ and $R$ the Novikov field. 
\end{proposition}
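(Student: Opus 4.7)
The plan is to filter the positive cochain complex $C^*_+(V; R)$ by the rational grading on the boundary induced by $\Psi$, and to analyze the resulting spectral sequence. Each Hamiltonian $1$-periodic orbit $\overline{\gamma}$ in the cylindrical end of $\widehat{V}$ corresponds to a Reeb orbit $\gamma$ on $Y$ and carries a well-defined rational Conley-Zehnder index $\mu_{\CZ}(\overline{\gamma}; \Psi) \in \tfrac{1}{N}\Z$. I would define a decreasing filtration $F^p C^*_+(V; R)$ (indexed by $p \in \Z$) generated by those orbits with $-N\mu_{\CZ}(\overline{\gamma}; \Psi) \ge p$. For any Floer cylinder or cascade $u$ in $\widehat{V}$ contributing to the differential, the Fredholm index formula
\[
\mu_{\CZ}(\overline{\gamma}_1; \Psi) - \mu_{\CZ}(\overline{\gamma}_2; \Psi) + 2 c_1^{\Psi}(u) = 1
\]
exhibits a decomposition $d = \sum_{k \in \Z} d^{(k)}$, where $d^{(k)}$ counts curves with $2N c_1^{\Psi}(u) = k$ and shifts the filtration by $N - k$. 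Combined with action truncation, which bounds the admissible values of $k$ within each filtered symplectic cohomology $SH^{*,<D}_+$, these shifts produce a spectral sequence $\{E_r, d_r\}$ with $E_0$ the associated graded of $C^*_+(V; R)$ and abutment $SH^*_+(V; R)$.

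For $W$ with the extension $\widetilde{\Psi}$, every Floer cylinder in $\widehat{W}$ has $c_1^{\widetilde{\Psi}}(u) = 0$, so $d_W$ consists entirely of the $d^{(0)}$ piece; equivalently the $W$-spectral sequence degenerates at $E_1$ and $SH^*_+(W; R; \widetilde{\Psi})$ is naturally $\Q$-graded. To identify the $(N+1)$th page of the $V$-spectral sequence with $SH^*_+(W; R; \widetilde{\Psi})$, I would perform full neck-stretching along $Y = \partial V$ inside $\widehat{V}$, in the spirit of Section~\ref{s3}. In the fully stretched limit, each Floer cylinder in $\widehat{V}$ splits into an SFT building in the symplectization of $Y$ (universal for any filling) capped off by planes in $\widehat{V}$. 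The generalized ADC assumption forces each plane asymptotic to a Reeb orbit $\gamma$ of period below the cutoff $D$ to carry virtual dimension $\mu_{\CZ}(\gamma) + n - 3 > 0$, ruling out rigid plane contributions. Consequently the symplectization parts of the differentials match for $V$ and $W$, and proceeding page-by-page the differentials $d_1, \ldots, d_N$ of the $V$-spectral sequence reproduce those (vanishing) of the $W$-spectral sequence, yielding an isomorphism $E_{N+1} \cong SH^*_+(W; R; \widetilde{\Psi})$ of filtered modules. The precise page number $N+1$ reflects that $c_1^{\Psi}$ takes values in $\tfrac{1}{N}\Z$, so the sub-integer Chern-class contributions are all absorbed within the first $N$ pages.

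For the compatibility with $\delta_{\partial}$ in part~\ref{SS_mor}, I would equip the Morse cochain complex $C^*(Y; R)$ with the analogous filtration by the $\Psi$-grading of critical points of an admissible Morse function. Since $\delta_{\partial}$ is defined in \cite[\S 3.1]{filling} by counting Floer configurations ending on gradient flow lines in $\partial V$, it respects both filtrations, and on the $(N+1)$th page the induced map is exactly the natural connecting map $SH^*_+(W; R; \widetilde{\Psi}) \to H^{*+1}(Y; R)$. The generalized TADC case with (semi-positive) strong fillings $V, W$ and Novikov-field coefficients follows by the same plan: the uniform lower bound $\alpha < \alpha_i$ guarantees well-defined action-filtered positive symplectic cohomology and compatible filtered Viterbo transfer maps for strong fillings, and passing to the Novikov field absorbs the sphere-bubbling contributions in the semi-positive setting that would otherwise obstruct the comparison.

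The main obstacle is making the identification at the $(N+1)$th page rigorous. This requires a delicate Chern-class-by-Chern-class comparison of the $V$- and $W$-spectral sequences via neck-stretching, combined with the generalized ADC dimension bound to rule out rigid planes in each filling at every intermediate page. In the strong filling case, an additional subtlety is that sphere bubbling could a priori shift the effective Chern-class contributions; the Novikov weights together with semi-positivity are needed to tame this and ensure the spectral sequence remains well-behaved.
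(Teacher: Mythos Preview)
Your overall strategy matches the paper's: filter the positive cochain complex by the boundary Conley--Zehnder grading $|x|^\partial = n - \mu_{\CZ}(x;\Psi)$ and use neck-stretching together with the generalized ADC condition. However, there is a genuine gap in how you set up the spectral sequence. You assert that action truncation bounds the admissible values of $k = 2Nc_1^\Psi(u)$ and that this boundedness ``produces a spectral sequence''. Gromov compactness does bound $k$ within each $C^{*,<D}_+$, but boundedness is not what is needed: for $\{F^p\}$ to be a filtered complex at all, the differential must \emph{preserve} the filtration, i.e.\ every shift $N-k$ must be $\ge 0$. On an arbitrary exact filling $V$ with possibly nonzero $c_1^\Q(V)$ there is no a priori reason for $c_1^\Psi(u)\le \tfrac12$, and neither action nor energy controls the sign of $c_1^\Psi(u)$.

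The paper resolves this by invoking neck-stretching \emph{upfront}, following \cite[Proposition 3.3]{quotient}: for a sufficiently stretched almost complex structure, any rigid Floer cylinder in $\widehat V$ breaks into a somewhere-injective top level in the symplectization (whose expected dimension, computed via $\Psi$, is therefore non-negative) capped by planes in $\widehat V$, each of SFT degree $\mu_{\CZ}(\gamma)+n-3\ge \tfrac1N$. This forces $|y|^\partial - |x|^\partial \ge 1$, so the filtration shift is always $\ge N$; in particular $d_1,\ldots,d_{N-1}$ vanish automatically and $d_N$ counts exactly the unpunctured curves lying in the cylindrical end, which are filling-independent. Thus the neck-stretching and ADC argument you reserve for the ``delicate'' identification of $E_{N+1}$ is precisely what is needed to make the spectral sequence exist in the first place; once that is in place, the identification is immediate rather than a page-by-page comparison. (A related indexing slip: with your filtration the piece $d^{(0)}$ shifts by $N$, so the $W$-spectral sequence degenerates at $E_{N+1}$, not at $E_1$.)
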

\begin{proof}
First note that $\mu_{\CZ}(x)$ computed using the trivialization $\Psi$ is always a multiple of $\frac{1}{N}$. Hence the $\pi_1$-sensitive ADC property implies that $\mu_{\CZ}+n-3\ge \frac{1}{N}$. The proof follows from applying arguments in \cite[\S 3]{quotient} to the spectral sequence associated to the filtration
$$F^kCF_+(H):=\left\langle x\left| |x|^{\partial}\ge \frac{k}{N} \right.\right\rangle, \quad k\in \Z, $$
where $|x|^{\partial}=n-\mu_{\CZ}(x)$ defined using the trivialization $\Psi$. By the same argument of \cite[Proposition 3.3]{quotient}, the differential is compatible with filtration by neck-stretching, and moreover, there is no differential before the $N$th page, and on $N$th page, there are differentials from $x$ to $y$ with $|y|^{\partial}-|x|^{\partial}=1$, whose underlying curve is contained in the cylindrical end of the completion for a sufficiently stretched almost complex structure. This differential computes $SH^*_+(W;R;\widetilde{\Psi})$, yielding the first claim. The Morse cochain complex of $Y$ has a similar filtration by 
$$F^kC^*(g_{\partial}):=\left\langle p\in Crit(g_{\partial})\left| \ind_{\mathrm{Mo}}(p)\ge \frac{k}{N}+1 \right.\right\rangle, \quad k\in \Z, $$
where $g_{\partial}$ is an auxiliary Morse function on $Y$ in the definition of $\delta_{\partial}$ in \S \ref{ss24} and $\ind_{\mathrm{Mo}}$ is the Morse index. The second claim follows from the proof of \cite[Proposition 3.4]{quotient}. 

Strictly speaking, we need to apply the above argument to the infinite telescope construction, since the above spectral sequence only works for $CF_+^{*,<D_i}(\alpha_i)$ (the positive symplectic cochain complex of a strict filling of $(Y,\alpha_i)$ using a Hamiltonian with slope $D_i$). The full positive symplectic cohomology is computed as the $\varinjlim CF_+^{*,<D_i}(\alpha_i)$, where the connecting morphism is the Viterbo transfer map \cite[Proof of Proposition 3.8]{lazarev2016contact}. The direct limit has a model 
\begin{equation}\label{eqn:limit}
    Cone\left(\oplus_i CF_+^{*,<D_i}(\alpha_i)\stackrel{1-\nu}{\longrightarrow} \oplus_iCF_+^{*,<D_i}(\alpha_i)\right)
\end{equation}
where $Cone$ stands for mapping cone and $\nu:\oplus_i CF_+^{*,<D_i}(\alpha_i)\to \oplus_i CF_+^{*,<D_i}(\alpha_i)$ is induced from $ CF_+^{*,<D_i}(\alpha_i)\to  CF_+^{*,<D_{i+1}}(\alpha_{i+1})$, see \cite[Definition 28, Remark 24]{murfet2006derived}. We can apply the above spectral sequence argument to \eqref{eqn:limit}.
\end{proof}

\begin{corollary}\label{cor:bound}
Let $(Y^{2n-1},\xi)$ be the contact boundary of a flexible Weinstein domain $W^{flex}$ with $c^{\Q}_1(W^{flex})=0$. Then for any exact filling $W$ of $Y$, we have $\dim \oplus SH^*_+(W;\Q)\le \dim \oplus H^*(W^{flex};\Q)$.
\end{corollary}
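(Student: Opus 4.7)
The plan is to apply the spectral sequence of \Cref{prop:spectral_sequence} taking the flexible filling $W_0$ as the reference filling, so that the $(N+1)$th page is directly computed by $W_0$; then I would invoke the vanishing of $SH^*(W_0;\Q)$ to turn everything into a statement about singular cohomology.

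Since $W_0$ is a Weinstein domain with $c^{\Q}_1(W_0)=0$, there exist $N\in\N_+$ and a trivialization $\widetilde{\Psi}$ of $\det_{\C}\oplus^N TW_0$; set $\Psi:=\widetilde{\Psi}|_Y$. By \Cref{ex:generalized_ADC}(2), $(Y,\xi,\Psi)$ is generalized ADC. For any exact filling $W$ of $Y$, applying \Cref{prop:spectral_sequence} with $V=W$ produces a spectral sequence converging to $SH^*_+(W;\Q)$ (as an ungraded $\Q$-vector space) whose $(N+1)$th page is isomorphic to $SH^*_+(W_0;\Q;\widetilde{\Psi})$. Since $E_\infty^{**}$ is a subquotient of $E_{N+1}^{**}$ and the total dimension of the associated graded of the limit equals the total dimension of the limit, one obtains
\[
\dim_{\Q}\bigoplus_* SH^*_+(W;\Q)\;\le\;\dim_{\Q}\bigoplus_* SH^*_+(W_0;\Q).
\]

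To evaluate the right-hand side, I would invoke the Bourgeois--Ekholm--Eliashberg vanishing theorem \cite{BEE}: for a flexible Weinstein domain, $SH^*(W_0;\Q)=0$. Substituting this into the tautological long exact sequence \eqref{eqn:LES} yields isomorphisms $SH^{k-1}_+(W_0;\Q)\cong H^k(W_0;\Q)$ for every $k$, so summing over degrees gives $\dim\oplus SH^*_+(W_0;\Q)=\dim\oplus H^*(W_0;\Q)$. Combined with the inequality above, this finishes the proof.

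The main technical point to verify is the legitimacy of comparing total dimensions via this spectral sequence, since the boundary-grading filtration is a priori unbounded (arbitrarily long Reeb orbits of arbitrarily large Conley--Zehnder index contribute). This is handled by the infinite telescope construction built into the proof of \Cref{prop:spectral_sequence}, which reduces the convergence to finite Floer windows filtered by period $D_i$; the finite-dimensionality of $H^*(W_0;\Q)$ then forces $SH^*_+(W;\Q)$ to be finite-dimensional as well, so the comparison of total dimensions is unambiguous.
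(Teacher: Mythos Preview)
Your proof is correct and follows essentially the same route as the paper: both invoke \Cref{ex:generalized_ADC}(2) to get generalized ADC for a trivialization extending over $W_0$, apply \Cref{prop:spectral_sequence} with $W_0$ as the reference filling, and use the vanishing $SH^*(W_0;\Q)=0$ from \cite{BEE,Subflexible} together with \eqref{eqn:LES} to identify the $(N+1)$th page with $H^{*+1}(W_0;\Q)$. Your version is more explicit about the subquotient argument and the convergence issue, but the underlying strategy is identical.
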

\begin{proof}
By \Cref{ex:generalized_ADC}, there is a trivialization $\Psi$, such that $(Y,\xi,\Psi)$ is $\pi_1$-sensitive ADC. Since the trivialization $\Psi$ is the restriction of a trivialization of $\det_{\C}\oplus^N TW^{flex}$, by \cite{BEE,Subflexible}, we have $SH_+^*(W^{flex};\Q)=H^{*+1}(W^{flex},\Q)$. Then by \Cref{prop:spectral_sequence}, we have $\dim \oplus SH^*_+(W;\Q)\le \dim \oplus H^*(W^{flex};\Q)$ for any exact filling $W$. 
\end{proof}

In general, a morphism between two spectral sequences in \eqref{SS_mor} of \Cref{prop:spectral_sequence} only captures the morphism on the associated grade of the limits, i.e.\ the leading terms. But we can exploit the tautological degeneracy of the spectral sequence on the Morse cochain complex of $Y$ in some special cases, which improves some of the results in \cite{filling}.

\begin{proof}[Proof of \Cref{prop:SS}]
For \eqref{c1}, by assumption, we have $SH^{*-1}_+(W^{flex};\Q)\simeq H^*(W^{flex};\Q)\to H^*(Y;\Q)$ is injective. Then by \Cref{prop:spectral_sequence}, the spectral sequence map from the spectral sequence of $SH^*_+(W;\Q)$ to that of $H^*(Y;\Q)$, on the $(N+1)$th page, is isomorphic to the injective map $SH^*_+(W^{flex};\Q)\to H^{*+1}(Y;\Q)$. Since the spectral sequence on $H^*(Y;\Q)$ of index gap $1/N$ degenerates from the $(N+1)$ page, the injectivity implies that the spectral sequence on $SH^*_+(W;\Q)$  also degenerates at the $(N+1)$ page. And the map $SH^*(W;\Q)\to H^{*+1}(Y;\Q)$ on the associated graded is the same as $SH^*_+(W_0;\Q)\to H^{*+1}(Y;\Q)$. Since $1$ is in the image of  $SH^*_+(W^{flex};\Q)\to H^{*+1}(Y;\Q)$, we know that $1+a$ is in the image of  $SH^*_+(W;\Q)\to H^{*+1}(Y;\Q)$ for $\deg(a)>0$. Therefore, we have $SH^*(W;\Q)=0$ and  $SH^*_+(W;\Q)\simeq H^{*+1}(W;\Q)$. The injectivity of $SH^*_+(W^{flex};\Q)\to H^{*+1}(Y;\Q)$ implies that  $SH^*_+(W;\Q)\simeq H^{*+1}(W;\Q) \to  H^{*+1}(Y;\Q)$ is also injective. As a consequence, we have $c_1^{\Q}(W)=0$. Then the claim follows from \Cref{prop:simple}.

For \eqref{c2}, when $n$ is even,  since $W^{flex}$ is Weinstein, i.e.\ it has a handle decomposition of $k$-handles for $k\le n$, we have $H^*(W^{flex};\Z)\to H^{*}(Y;\Z)$ is injective for $*\le n-1$. In particular, we have $H^*(W^{flex};\Z)\to H^{*}(Y;\Z)$ is injective on odd degrees. Note that symplectic cohomology is canonically graded by $\Z/2$, and the differentials on the spectral sequence are compatible with the $\Z/2$ grading. By looking at the $(N+1)$th page of the spectral sequence map as before, which is injective on even ($\Z/2$) degrees of $SH^*_+(W;\Z)$, the differential of the $(N+1)$th page of the spectral sequence for $SH^*_+(W;\Z)$ must be zero on odd degrees. As a consequence, by induction, the $(N+k)$th page of the spectral sequence map is injective on even degrees, and the differential of the $(N+k)$th page of the spectral sequence for $SH^*_+(W;\Z)$ must be zero on odd degrees for $k>0$. As a consequence, we also have that $1$ is in the image of the morphism of spectral sequences in \eqref{SS_mor} of \Cref{prop:spectral_sequence} on the $\infty$th page, hence $SH^*(W;\Z)=0$ and $SH^*_+(W;\Q)\simeq H^{*+1}(W;\Q)$. Then \Cref{cor:bound} implies that $\dim \oplus_{*=1}^{2n}H^*(W;\Q)\le \dim \oplus_{*=1}^{2n}H^*(W^{flex};\Q)$.

For \eqref{a}, we claim that the spectral sequence of $SH_+^*(W;\Q)$ degenerates at the $(N+1)$th page. For otherwise, the non-trivial differential must be from even degrees of $SH^*_+(W;\Q)$ to odd degrees by the argument in \eqref{c2}. However, this implies that the total dimension of even degrees of $SH^*_+(W;\Q)$ is smaller than that of $SH^*_+(W^{flex};\Q)$, which is the total dimension of $\oplus_{i=1}^{n} H^{2i+1}(W^{flex};\Q) \simeq \oplus_{i=1}^{n} \Ima (H^{2i+1}(W^{flex};\Q)\to H^{2i+1}(Y;\Q))$.  By \cite[the proof of Proposition 4.3. Case I]{bowden2022making}\footnote{Or Theorem 37 of the first version of \cite{bowden2022making} available at \href{https://arxiv.org/pdf/2211.03680v1.pdf}{https://arxiv.org/pdf/2211.03680v1.pdf}.}, we have $H^*(W;\Q)\to H^*(Y;\Q)$ is surjective onto the image of $H^*(W^{flex};\Q)\to H^*(Y;\Q)$ for $2\le * \le n$. As $H^1(W^{flex};\Q)=0$, we know that $\dim \oplus_{i=1}^{n} H^{2i+1}(W;\Q)\ge \dim \oplus_{i=1}^{n} H^{2i+1}(W^{flex};\Q)$. Then this contradicts with that $SH^*_+(W;\Q)\to H^{*+1}(W;\Q)$ is surjective (for $*$ even). Therefore the spectral sequence degenerates at the $(N+1)$th page and we have $SH^*_+(W;\Q)\simeq  SH^*_+(W^{flex};\Q)$, and the claim follows from that $SH^*(W;\Q)=0$ in \eqref{c2}.

For \eqref{b}, we already know that the spectral sequence of $SH_+^*(W;\Q)$ degenerates at the $(N+1)$th page. We claim that $H^2(W;\Q)\to H^2(Y;\Q)$ is injective, for otherwise, we have $\dim H^2(W,Y;\Q)\ge 1$. Then by Lefschetz duality and the universal coefficient theorem, we have $\dim H^{2n-2}(W;\Q)\ge 1$. As a consequence, we have $\dim \oplus_{*=0}^{2n} H^{*}(W;\Q)\ge 2+\dim \oplus_{*=0}^{n}\Ima(H^{*}(W^{flex};\Q)\to H^*(Y;\Q))$. When $\dim \ker (H^n(W^{flex};\Q)\to H^n(Y;\Q))=1$, we have $\dim \oplus_{*=0}^{2n}H^*(W^{flex}) = 1+ \oplus_{*=0}^{n}\Ima(H^{*}(W^{flex};\Q)\to H^*(Y;\Q))$. Hence we arrive at a contradiction with \eqref{c2}. Now since  $H^2(W;\Q)\to H^2(Y;\Q)$ is injective, we have $c_1^{\Q}(W)=0$ and we can apply \Cref{prop:simple}.
\end{proof}

The main difference between \Cref{prop:SS} and results in \cite{filling} is that we can prove topological simplicity instead of assuming it in \Cref{prop:SS}, but we need addition information, e.g.\ $Y$ being flexibly fillable,  to get the degeneracy of the spectral sequence. When $n$ is odd, a priori, there could be differentials in the spectral sequence acting non-trivially on the element in $SH^*_+(W)$ that is supposed to kill the unit. The $H^n(W^{flex};\Q)\to H^n(Y;\Q)$ injective condition prevents such differentials by exploiting the tautological degeneracy on the spectral sequence on $H^*(Y)$. It is possible to strengthen \eqref{b} to the case where the intersection form on the cokernel of $H_n(Y;\Q)\to H_n(W^{flex};\Q)$ is positive/negative definite. 

\section{Proof of \Cref{thm:main}}\label{s4}
Let $Y$ be a flexibly fillable contact manifold with $c^{\Q}_1(\xi)=0$ and $W$ a topologically simple exact filling. We pick a contact form $\alpha_D$ as in \Cref{prop:reeb}. Let $\gamma_1,\ldots,\gamma_N$ denote all the Reeb orbits of action smaller than $D$ with Conley-Zehnder index $1$. We assume they are ordered increasingly with respect to their periods. Let $(W_D,\lambda_D)$ be the strict topologically simple exact filling of the strict contact manifold $(Y,\alpha_D)$ that is homotopic to $W$. In the definition of filtered positive symplectic cohomology $SH^{*,<D}_+(W_D)$ with slope $D$, we will use the following special Hamiltonian $H$. 
\begin{enumerate}
	\item $H=0$ on $W_D$ and $H'(r)=D$ for $r>1+w$ for $w>0$.  
	\item $H$ on $[1,1+w]\times Y$ is a small perturbation of $H=f(r)$ with $f''(r)>0$ such that the periodic orbits of $X_H$ are non-degenerate and in a two-to-one correspondence with Reeb orbits of period smaller than $D$.
\end{enumerate}
More precisely, every non-degenerate Reeb orbits $\gamma$ will split into two Hamiltonian orbits $\hat{\gamma}$ and $\check{\gamma}$ with $\mu_{\CZ}(\hat{\gamma}) = \mu_{\CZ}(\gamma)+1$ and $\mu_{\CZ}(\check{\gamma})=\mu_{\CZ}(\gamma)$ following \cite{bourgeois2009symplectic}. Here we slightly abuse the notion as $\check{\gamma},\hat{\gamma}$ already appeared in the Morse-Bott setup in \S \ref{ss23}. This is for the simplicity of notation and is conceptually consistent, as the non-degenerate Hamiltonian orbits $\check{\gamma},\hat{\gamma}$ here corresponds to the critical points on $\overline{\gamma}$ used in \S \ref{ss23} through perturbation by \cite[Lemma 3.4]{bourgeois2009symplectic}. When we view them as generators of $CF_+^*(H)$, since $[\check{\gamma}_1], \ldots, [\check{\gamma}]$ have the minimum Conley-Zehnder indices/maximal cohomological grading, they represent classes in  $SH_+^{n-1,<D}(W;\Z)$, denoted by $[\check{\gamma}_1], \ldots, [\check{\gamma}_N]$.

In the following, we fix a contact form $\alpha_0$. For every $D > 0$, there exists a contact form $\alpha_D<\alpha_0$ such that Proposition \ref{prop:reeb} holds for the period threshold $D$. Let $M_D$ denote the cobordism from $\alpha_D$ to $\alpha_0$ in the symplectization of $(Y,\alpha_0)$. Then $W_D\cup M_D$ is a strict exact filling of $(Y,\alpha_0)$ that is homotopic to $W$, which is independent of $D$. Then by \eqref{eqn:viterbo},  we have a transfer map
$$SH_+^{*,<D}(W_D\cup M_D;\Z) \to SH_+^{*,<D}(W_D;\Z)$$
which is compatible with the connecting map to $H^*(W(\simeq W_D\simeq W_D\cup M_D);\Z)$. We will stretch on the contact boundary of $W_D$, the following propositions hold if we stretch the almost complex structure sufficiently.

\begin{proposition}\label{prop:n}
    Let $W^{flex}$ be the flexible filling of $Y$ and $W$ a topologically simple exact filling. For $D\gg 0$ and a sufficiently stretched almost complex structure, we have a commutative diagram
    $$\xymatrix{
     \la \check{\gamma}_1, \ldots, \check{\gamma}_N \ra \ar@{->>}[r] \ar[d]^{\Id} & SH^{n-1, <D}_+(W_D;\Z) \ar@{->>}[r] \ar[d]^{\simeq} & H^n(W_D;\Z)= H^n(W;\Z) \ar[d]^{\simeq} \\
     \la \check{\gamma}_1, \ldots, \check{\gamma}_N \ra \ar@{->>}[r] & SH^{n-1, <D}_+(W^{flex}_D;\Z) \ar@{->>}[r] & H^n(W^{flex}_D;\Z)= H^n(W^{flex};\Z) 
       }$$
       where the last two columns are isomorphisms that we will not specify.
\end{proposition}
\begin{proof}
By \cite[Corollary B]{filling}, we have $SH_+^{n-1}(W_D\cup M_D;\Z) \to H^n(W_D\cup M_D;\Z)=H^n(W;\Z)$ is an isomorphism. Therefore for $D\gg 0$, we have both
$$SH_+^{n-1,<D}(W_D\cup M_D;\Z) \to H^n(W_D\cup M_D;\Z),$$
$$SH_+^{n-1,<D}(W^{flex}_D\cup M_D;\Z) \to H^n(W^{flex}_D\cup M_D;\Z)$$
are surjective. Therefore by \eqref{eqn:viterbo}, we have both
$$SH_+^{n-1,<D}(W_D;\Z) \to H^n(W_D;\Z),\quad SH_+^{n-1,<D}(W^{flex}_D;\Z) \to H^n(W^{flex}_D;\Z)$$
are surjective. Following \cite[Proposition 3.12]{filling},  the identification between $SH_+^{n-1,<D}(W_D;\Z)$ with $SH_+^{n-1,<D}(W^{flex}_D;\Z)$ is by the obvious identification on generators and using sufficiently stretched almost complex structures which coincide in the cylindrical end for $W_D$ and $W^{flex}_D$. Therefore the first square is commutative. Finally since the identification of $H^n(W_D;\Z)$ with $H^n(W^{flex}_D;\Z)$ from \cite[Corollary B]{filling} is from the identification between $SH_+^{n-1}(W_D;\Z)$ and $SH_+^{n-1}(W^{flex}_D;\Z)$ using \cite[Proposition 3.12]{filling} and isomorphisms $SH_+^{n-1}(W_D;\Z)\to H^n(W_D;\Z)$,  $SH_+^{n-1}(W^{flex}_D;\Z)\to H^n(W^{flex}_D;\Z)$, the second square is commutative by construction.
\end{proof}

In the following, we will use $\alpha,\beta,\gamma$ to stand for Reeb orbits and $\hat{\alpha},\check{\alpha},\overline{\alpha}$ to stand for Hamiltonian orbits, where $\overline{\alpha}$ means that we do not specify whether it is a check or a hat orbit. 
\begin{proposition}\label{prop:id}
    Under the same assumptions in \Cref{prop:n}, for $D\gg 0$ and a sufficiently stretched almost complex structure,  there is a linear combination of Hamiltonian orbits $\sum a_i\overline{\alpha}_i$ of Conley-Zehnder index $n+1$, such that $\sum a_i\overline{\alpha}_i$  represents a class both in $SH^{-1,<D}_+(W_D;\Z)$ and  $SH^{-1,<D}_+(W^{flex}_D;\Z)$ and is sent to $1$ in both $H^0(W_D;\Z)$ and $H^0(W_D^{flex};\Z)$ respectively.
\end{proposition}
\begin{proof}
    By \cite[Corollary B]{filling}, we have a commutative diagram of isomorphisms
    $$\xymatrix{
    SH_+^*(W_D;\Z) \ar[r]^{\simeq}\ar[d]^{\simeq} & H^{*+1}(W_D;\Z)\ar[d]^{\simeq}\\
    SH_+^*(W^{flex}_D;\Z) \ar[r]^{\simeq} & H^{*+1}(W^{flex}_D;\Z)}
    $$
    For $D\gg 0$, \eqref{eqn:viterbo} implies that the following commutative diagram
    $$\xymatrix{
    SH_+^{-1,<D}(W_D;\Z) \ar@{->>}[r]\ar[d]^{\simeq} & H^{0}(W_D;\Z)\ar[d]^{\simeq}\\
    SH_+^{-1,<D}(W^{flex}_D;\Z) \ar@{->>}[r] & H^{0}(W^{flex}_D;\Z)}
    $$
    Since the first identification is from the obvious identification of generators (non-constant orbits), there is a linear combination of Hamiltonian orbits $\sum a_i\overline{\alpha}_i$ of Conley-Zehnder index $n+1$, such that $\sum a_i\overline{\alpha}_i$  represents a class both in $SH^{-1,<D}_+(W_D;\Z)$ and  $SH^{-1,<D}_+(W^{flex}_D;\Z)$ related by the map at the first column and is sent to $1$ in both $H^0(W_D;\Z)$ and $H^0(W_D^{flex};\Z)$ respectively.
\end{proof}

In the following, for the simplicity of notation, we will assume $\sum a_i \overline{\alpha}_i$ is represented by a single Hamiltonian orbit $\overline{\alpha}$.  The argument below works for linear combinations as long as they represent a closed class in the positive cochain complex.

In view of \Cref{prop:simple}, \cite[Corollary B]{filling} and the universal coefficient theorem, we have $H_*(W;\Z)=H_*(W^{flex};\Z)=\Z^m$ for the hypothetical filling $W$ of $Y$ in \Cref{thm:main}. In the following, we pick two sequences of smooth cycles $\{A_i\}_{1\le i \le m},\{B_i\}_{1\le i \le m}$ of $W$ as follows.
\begin{enumerate}
    \item $A_i=\sum_{j\in J_i} a_{i,j}\sigma^A_{i,j}$ for $a_{i,j}\in \Z$, where $\sigma^A_{i,j}$ is a smooth map from the $n$-simplex $\Delta^n$ to the interior of $W_D$, such that $\partial A_i=0$. The corresponding homology classes $\{[A_i]\}_{1\le i \le m}$ form a basic of $H_n(W;\Z)$. 
    \item Similarly, we have $B_i=\sum_{j\in J'_i} b_{i,j}\sigma^B_{i,j}$ such that $[A_i]=[B_i]\in H_n(W;\Z)$. We require that $\sigma^A_{i,j},\sigma^B_{k,l}$ intersect transversely (all intersections are contained in the interior of $\Delta^n$) for any $i,j,k,l$. We use $\sigma^A_{i,j}\cdot\sigma^B_{k,l}\in \Z$ to denote the transversal intersection number, i.e.\
    $$\sigma^A_{i,j}\cdot\sigma^B_{k,l} = \sum_{\substack{x,y\in \Delta^n \\ \sigma^A_{i,j}(x)=\sigma^B_{k,l}(y)}} \mathrm{sign}(x,y)$$
    where $\sign(x,y)$ the difference of orientations between $\mathrm{im}(\rD\sigma^A_{i,j}(x))\oplus \mathrm{im}(\rD\sigma^B_{k,l}(y))$ and $T_{\sigma_{i,j}^A(x)}W$. 
\end{enumerate}
The existence of such smooth cycles follows from \cite[Theorem 18.7]{zbMATH07848732} and the classical transversality theory. Now we pick $A=A_i,B=B_j$ and write $A=\sum_{i\in I} a_{i}\sigma_{i}^A,B=\sum_{j\in J}b_j\sigma_j^B$ for simplicity. 
\begin{lemma}\label{lem:intersection}
     $[A]\cdot [B]=\sum_{i\in I,j\in J} a_ib_j\sigma^A_{i}\cdot\sigma^B_{j}.$
\end{lemma}
\begin{proof}
   The argument is motivated by \cite[Theorem 11.10]{zbMATH00425858}. As $\sigma_i^A$ is transverse to $\sigma_j^B$, after small perturbations of $\sigma_i^A,\sigma_j^B$ in the interior of $\Delta^n$ which does not change $\sigma_i^A\cdot \sigma^B_j$,  we may assume for every $z\in W$ there exists at most one pair of indices $(i,j)$ and one pair of points $(x,y)$ such that $\sigma_i^A(x)=\sigma_j^B(y)=z$. Let $\{z_k\}_{1\le k \le M}$ be the set of such intersection points. By transversality at the intersection point, we can find small balls $\cB_k$ centered around $z_k$, such that the image of $\sigma_i^A,\sigma_j^B$ in $B_k$ are two $n$-dimensional coordinate subspaces in $\cB_k$ that intersect transversely. We can find neighborhoods $\cU_A,\cU_B$ with smooth boundary containing $\cup_i \Ima(\sigma_i^A),\cup_j\Ima(\sigma_j^B)$ respectively, such that $\cU_A\cap \cU_B=\cup_{1\le k \le M} \cB_k$. Let $D_W^A:H_*(\cU_A)\to H^{2n-*}(W,W-\cU_A), D_W^B:H_*(\cU_B)\to H^{2n-*}(W,W-\cU_B)$ be the duality isomorphism. Following the definition in \cite[\S 11]{zbMATH00425858}, the intersection number $[A]\cdot [B]$ is defined as $D_W^B([B])\cup D_W^A([A])\in H^{2n}(W,W-\cup_{1\le k\le M}\cB_k)$ composed with $H^{2n}(W,W-\cup_{1\le k\le M}\cB_k)\to H_0(\cup_{1\le k\le M}\cB_k)\to H_0(W)=\Z$, where the first map is the duality isomorphism. As it suffices to verify the relation over $\R$, we use the de Rham model for the cohomology. Assume $\eta_A\in H^n_c(\cU_A)$ is the Lefschetz dual of $[A]$, i.e.\ for any $\alpha\in H^n(\cU_A)$, we have $\int_A \eta :=\sum_{i}\int_{\Delta^n}(\sigma_i^A)^*\alpha=\int_{\cU_A}\alpha \wedge \eta_A$. Next, we focus on $\cB_k$ and write it as $D_1\times D_2$, where the ball $D_1\times \{0\}$ is the image of $\sigma_i^A$ and the ball $\{0\} \times D_2$ is the image of $\sigma^B_j$. Then $\eta_A$ is a differential form on $D_1\times D_2$ compactly supported in the $D_2$-fiber direction. Then by choosing $\alpha=\pi_1^*\nu$, where $\nu$ is compactly supported in $D_1$ with $\int \nu =1$ and $\pi_1$ is the projection $D_1\times D_2\to D_1$, we conclude that $\int_{\{*\} \times D^2}\eta_A=\pm a_i$ from $\int_A \eta=\int_{\cU_A}\alpha \wedge \eta_A$, the sign is determined if $T_0D_1\oplus T_0D_2$ coincides with $T_x\cB_k$ on orientation. After a modification by an exact form, we may assume $\eta_A$ in $\cB_k$ is in the form of $\pm \pi_2^*\nu$, where $\nu$ is the compactly supported form in $D_2$ as above and $\pi_2:D_1\times D_2\to D_2$ is the projection. As $D_W^B([B])\cup D_W^A([A])=\eta_B\cup \eta_A$, and in each $\cB_k$, $\eta_B\cup \eta_A=\pm a_ib_j\pi_1^*\nu \wedge \pi_2^*\nu$, whose total integration is $\sum_{i\in I,j\in J} a_ib_j\sigma^A_{i}\cdot\sigma^B_{j}$.
\end{proof}

For the periodic orbit $\overline{\alpha}$, we consider the following compactified moduli space:
$$\cM^{\overline{\alpha}}_{\sigma^A_i,\sigma^B_j}:=\overline{\left\{ u:\C \to \widehat{W} \left| (\rd u - v)^{0,1}=0, u(\infty) = \overline{\alpha}\right.\right\}\prescript{}{ev_0\times ev_1}{\times}_{\sigma^A_i\times \sigma^B_j}(\Delta^n\times \Delta^n)}.$$
Here we have
\begin{enumerate}
    \item   $v = X_{H} \otimes \beta$ with $H$ a Hamiltonian as before and $\beta$ a one form, such that  $\beta = \rd t$ near the ends w.r.t.\ fixed cylindrical coordinates (we fix biholomorphisms from $ (-\infty,0)\times S^1_t$ to neighborhoods of $0,1$,  and from $(0,+\infty)\times S^1_t$ to a neighborhood of $\infty$, i.e.\ $\infty$ is a positive puncture, while $0,1$ are negative punctures)  and $\rd \beta\le 0$.
    \item  $u(\infty) = \overline{\alpha}$ is a short hand for $\lim_{s\to \infty} u =\overline{\alpha}$ for the cylindrical coordinate $(0,+\infty)_s\times S^1_t \to \C, (s,t)\mapsto e^{2\pi(s+\mathbf{i}t)}$.
    \item $ev_0,ev_1$ are evaluation maps of the curve $u$ at $0,1$. Since $H=0$ on the image of $A,B$, the removal of singularity implies that $u$ can be viewed as a map on $\C$.
    \item\label{trans} The transversality assumptions are: both
    $$\cM:=\left\{ u:\C \to \widehat{W} \left| (\rd u - v)^{0,1}=0, u(\infty) = \overline{\alpha}, u(0),u(1)\in W^{\circ}_D(\text{the interior})\right.\right\}$$
    and the fiber product $\prescript{}{ev_0\times ev_1}{\times}_{\sigma^A_i\times \sigma^B_j}$ are cut out transversely on the interior as well as each boundary and corner strata of $\Delta^n\times \Delta^n$. Alternatively, we require the induced Fredholm section on the fiber product of the Banach manifold defining $\cM$ with $\Delta^n\times \Delta^n$ by $\prescript{}{ev_0\times ev_1}{\times}_{\sigma^A_i\times \sigma^B_j}$ is cut out transversely on each strata. The former transversality property will imply the latter property, but the latter Fredholm problem is again index $1$ (on the top stratum) and it is easy to see that it can be arranged by the somewhere injective property of $u$. However, the first seemingly stronger transversality can also be arranged in view of the submersion property of the universal moduli spaces \cite[Proposition 3.4.2, Lemma 3.4.3]{zbMATH06054087}.
    \item Such moduli spaces have a coherent orientation, which takes the orientations of $\cM$, $W_D$, $\Delta^n$ as well as $ev_0\times ev_1$ and $\sigma^A_i\times \sigma^B_j$ into account.
\end{enumerate}
Similarly for another orbit $\overline{\beta}$, we can define $\cM^{\overline{\alpha}}_{\overline{\beta}, \sigma_j^B}$ and $\cM^{\overline{\alpha}}_{\sigma^A_i, \overline{\beta}}$ as follows.
$$\cM^{\overline{\alpha}}_{\overline{\beta}, \sigma^B_j}:=\overline{\left\{ u:\C \to \widehat{W} \left| (\rd u - v)^{0,1}=0, u(\infty) = \overline{\alpha}, u(0)=\overline{\beta}\right.\right\}\prescript{}{ev_1}{\times}_{\sigma_j^B}\Delta^n}$$
$$\cM^{\overline{\alpha}}_{\sigma_i^A, \overline{\beta}}:=\overline{\left\{ u:\C \to \widehat{W} \left| (\rd u - v)^{0,1}=0, u(\infty) = \overline{\alpha}, u(1)=\overline{\beta}\right.\right\}\prescript{}{ev_0}{\times}_{\sigma_i^A}\Delta^n}$$
where $u(0),u(1)=\overline{\beta}$ is the short hand for $\lim_{s\to-\infty}u=\overline{\beta}$ using the cylindrical coordinates near negative punctures $0$ and $1$ respectively. We also define $\cM^{\overline{\alpha}}_{\sigma^A_i}$ to be the compactification of the following.
$$\left(\left\{u:\C \to \widehat{W}\left| (\rd u- X_{H}\rd t)^{0,1}=0, u(\infty) = \overline{\alpha}\right.\right\}/\R \right)\prescript{}{ev_0}{\times}_{\sigma_i^A}\Delta^n.$$
The transversality requirement regarding boundary and corners of $\Delta^n$ for those moduli spaces is identical to \eqref{trans} above. 

For $\overline{\alpha}$ from \Cref{prop:id} and $\check{\gamma}$ from \Cref{prop:n}, the dimension of $\cM^{\overline{\alpha}}_{\sigma^A_i,\sigma^B_j}$ is $1$ and the dimensions  of $\cM^{\overline{\alpha}}_{\sigma^A_{i},\check{\gamma}},\cM^{\overline{\alpha}}_{\check{\gamma},\sigma^B_j},\cM^{\check{\gamma}}_{\sigma^A_i},\cM^{\check{\gamma}}_{\sigma^B_j}$ are $0$. The transversality requirement from \eqref{trans} implies that the latter four moduli spaces sit over the interior of $\Delta^n$, while part of the boundary of $\cM^{\overline{\alpha}}_{\sigma^A_i,\sigma^B_j}$ is from $\partial \sigma_i^A$ and $\partial \sigma_j^B$ sitting over the interior of faces of $\Delta^n$. Then we define the following counting:
\begin{enumerate}
    \item $\#\cM^{\overline{\alpha}}_{A,\check{\gamma}}:=\sum_{i\in I}a_i\#\cM^{\overline{\alpha}}_{\sigma^A_{i},\check{\gamma}}$ and  $\#\cM^{\overline{\alpha}}_{\check{\gamma},B}:=\sum_{j\in J}b_j\#\cM^{\overline{\alpha}}_{\check{\gamma},\sigma^B_j}$.
    \item $\#\cM^{\check{\gamma}}_{A}:=\sum_{i\in I}a_i\#\cM^{\check{\gamma}}_{\sigma^A_i}$. 
\end{enumerate}

\begin{proposition}\label{prop:int}
	    Let $\overline{\alpha}$ be the class in Proposition \ref{prop:id} and $A,B$ above , then the intersection number is
        \begin{equation}\label{eqn:intersection}
            [A]\cdot [B] = \sum_{i=1}^N\left(\#\cM^{\overline{\alpha}}_{\check{\gamma}_i,B}\times \#\cM^{\check{\gamma}_i}_{A}+\#\cM^{\overline{\alpha}}_{A,\check{\gamma}_i}\times \#\cM^{\check{\gamma}_i}_{B} \right).
        \end{equation}
\end{proposition}
\begin{proof}
It follows from the boundary configuration of $\cM^{\overline{\alpha}}_{\sigma^A_i,\sigma^B_j}$ whose dimension is $1$, i.e.\ from the counting 
\begin{equation}\label{eqn:boundary}
    \sum_{i\in I, j\in J}  a_ib_j\#\partial \cM^{\overline{\alpha}}_{\sigma^A_i,\sigma^B_j}=0.
\end{equation}
Note that $\dim \cM^{\overline{\beta}}_{\sigma^A_i} = \mu_{\CZ}(\overline{\beta})-1$, and all periodic orbits have Conley-Zehnder indices greater than $1$ unless they are one of $\check{\gamma}_i$. By degree reason, the Floer type breakings near negative punctures $0,1$ in \eqref{eqn:boundary} give rise to the right-hand side of \eqref{eqn:intersection}. Those boundaries in \eqref{eqn:boundary} coming from $\partial \sigma_i^A,\partial \sigma^B_j$ cancel with each other as $\partial \sum a_i\sigma_i^A=\partial \sum b_j \sigma^B_j=0$.
    \begin{figure}[H] {\small
    \begin{overpic}[scale=0.5]
    {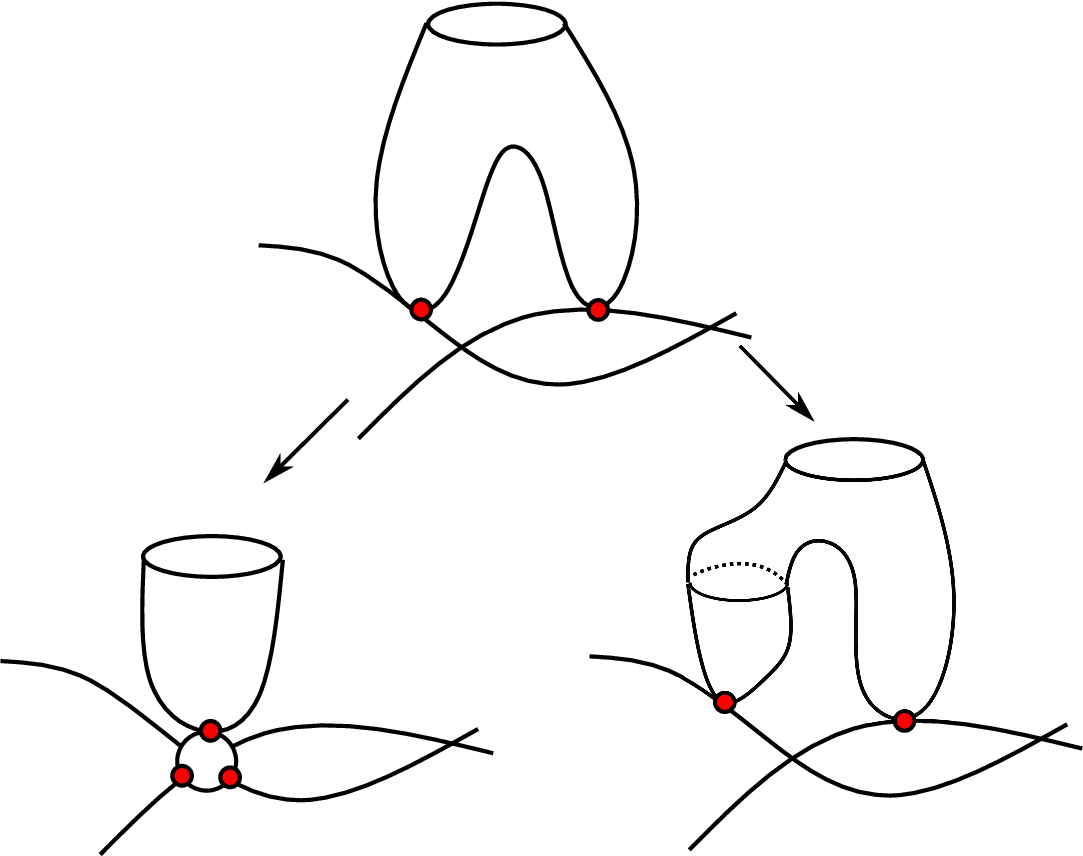}
    \put(45,80) {$\overline{\alpha}$}
    \put(25,58) {$\sigma^A_i$}
    \put(35,35) {$\sigma^B_j$}
    \put(15,2) {Constant sphere at $\sigma^A_i\pitchfork \sigma^B_j$}
    \put(68,25) {$\overline{\beta}$}
    \put(8,40) {degeneration}
    \put(72,45) {degeneration}
    \end{overpic}}
    \caption{Degeneration of $\cM^{\overline{\alpha}}_{\sigma^A_i,\sigma^B_j}$}
    \label{fig:deg}
    \end{figure}
Since $\overline{\alpha}$ is closed in the positive symplectic cohomology, the Floer type breakings near $\infty$ at a non-constant orbit sum up to $0$.  We consider the Floer type breakings near $\infty$ at the interior of $W$. By the integrated maximal principle, the bottom curve is contained in $W$, where the equation is the Cauchy-Riemann equation. By the exactness of $W$, such a curve must be constant with three punctures resting on a point in $\sigma_i^A\pitchfork \sigma_j^B$. Therefore such degeneration can be identified with curves $u:\C \to \widehat{W}$ solves $(\rd u-X_{H}\rd t)^{0,1}=0$ with $u(\infty)=\overline{\alpha}$ and $u(0)\in \sigma^A_i\pitchfork \sigma^B_j$ modulo the $\R$ translation. Since $\overline{\alpha}$ is mapped to $1 \in H^0(W;\Z)$, i.e.\ the count of curves $u$ with a point constraint at $u(0)$ and $u(\infty)=\overline{\alpha}$ modulo $\R$ is $1$ when transversality holds. Therefore, this type of degeneration is counted as the transversal intersection number $\sigma_i^A\cdot \sigma_j^B$. Therefore those types of degeneration in \eqref{eqn:boundary} sum to $[A]\cdot [B]$ by \Cref{lem:intersection}. Since the orientation problem at hand is identical to the case of continuation maps, this explains the sign difference between breaking at the positive puncture and negative punctures, hence the relation \eqref{eqn:intersection}. The transversality (requirement \eqref{trans} above) of those moduli spaces holds if we choose $J$ generically, following the same argument as \cite[Proposition 2.8]{filling} using the submersion property of the universal moduli spaces \cite[Proposition 3.4.2, Lemma 3.4.3]{zbMATH06054087}.
\end{proof}

When $A,B$ are $n$-dimensional closed submanifolds of $W$ such that $A\pitchfork B$, we can define the compactified moduli space 
\begin{equation}\label{eqn:moduli1}
    \cM^{\overline{\alpha}}_{A,B}:=\overline{\left\{ u:\C \to \widehat{W} \left| (\rd u - v)^{0,1}=0, u(\infty) = \overline{\alpha}, u(0) \in A, u(1) \in B\right.\right\}}
\end{equation}
whose boundary configuration gives the proof of \Cref{prop:int}. Here we define
\begin{equation}\label{eqn:moduli2}
    \cM^{\overline{\alpha}}_{\overline{\beta}, B}:=\overline{\left\{ u:\C \to \widehat{W} \left| (\rd u - v)^{0,1}=0, u(\infty) = \overline{\alpha}, u(0)=\overline{\beta}, u(1)\in B\right.\right\}},
\end{equation}
\begin{equation}\label{eqn:moduli3}
    \cM^{\overline{\alpha}}_{A, \overline{\beta}}:=\overline{\left\{ u:\C \to \widehat{W} \left| (\rd u - v)^{0,1}=0, u(\infty) = \overline{\alpha}, u(0)\in A, u(1)=\overline{\beta}\right.\right\}},
\end{equation}
\begin{equation}\label{eqn:moduli4}
    \cM^{\overline{\alpha}}_{A}=\overline{\left\{u:\C \to \widehat{W}\left| (\rd u- X_{H}\rd t)^{0,1}=0, u(\infty) = \overline{\alpha}, u(0)\in A\right.\right\}/\R}.
\end{equation}
The right-hand side of \eqref{eqn:intersection} is the counting of the moduli spaces above. It coincides with the definition before \Cref{prop:int} after we choose appropriate triangulations on $A$ and $B$. The proof of \Cref{prop:int} illustrates how to work with smooth cycles $\{A_i,B_i\}_{1\le i \le m}$ and 
shows that there is no essential difference (except for introducing cumbersome notation) from working with the simpler moduli spaces \eqref{eqn:moduli1}-\eqref{eqn:moduli4} above when $A_i,B_i$ are represented by closed submanifolds. In the following, we will assume $\{A_i,B_i\}_{1\le i \le m}$ are represented by closed $n$-submanifolds for simplicity and work with \eqref{eqn:moduli1}-\eqref{eqn:moduli4}. In general, we need to work with the version with smooth cycles and argue as in \Cref{prop:int}.

\begin{proposition}\label{prop:pair}
	For a sufficiently stretched almost complex structure, we have
	$$ \#\cM^{\check{\gamma}_i}_B=\la \delta([\check{\gamma}_j]), [B] \ra,$$
	where the last pairing is the natural map $H^n(W;\Z)\otimes H_n(W;\Z) \to \Z$ and $\delta$ is the map $SH^*_+(W;\Z)\to H^{*+1}(W;\Z)$.
\end{proposition}
\begin{proof}
	Given a Morse function $g$ on $W$ such that $\partial_r g >0$ on $\partial W$, then we can represent a cochain complex of $H^*(W;\Z)$ by critical points of $g$, then the paring of a critical point $x$ with a cycle $B$ is the intersection number of the ascending manifold of $x$ with $B$, i.e.\ the top right corner of \Cref{fig:def1}, transversality can be achieved by either using a generic metric define $\nabla g$ or perturbing $B$. Following \cite[\S 2]{filling}, the map $\delta$ can be represented by counting the moduli space of $(u,l)$ with $u$ solves the Floer equation and $u(0)\in W$ and $l$ is a gradient trajectory from $u(0)$ to a critical point $x$, i.e.\ top left corner of \Cref{fig:def1}.
    \begin{figure}[H] {\small
    \begin{overpic}
    {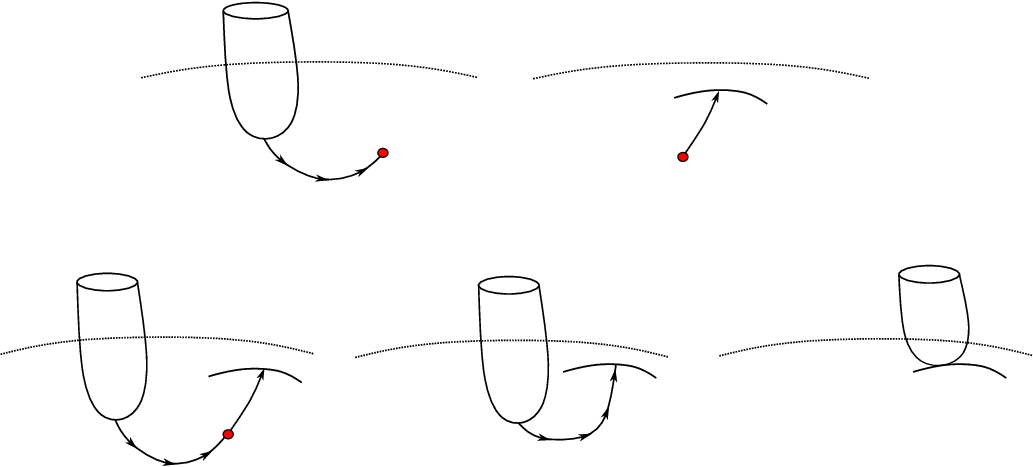}
    \put(38,30) {$x$}
    \put(12,38) {$\partial W$}
    \put(30,25) {$ \delta([\check{\gamma}_j])$}
    \put(55,25) {Paring the Morse cocohain complex with $B$}
    \put(75,35) {$B$}
    \put(15,-2) {$\la \delta([\check{\gamma}_j]), B \ra$}
    \put(90,6) {$\cM_{\check{\gamma}_i,B}$}
    \put(25,35) {$u$}
    \put(33,29) {$l$}
    \put(15,3) {$l_1$}
    \put(25,5) {$l_2$}
    \put(50,1) {finite gradient flow of $\nabla g$}
    \end{overpic}}
    \caption{$\la \delta([\check{\gamma}_j]), B \ra$ and its deformation}
    \label{fig:def1}
    \end{figure}
    \noindent
    Therefore $\la \delta([\check{\gamma}_j]), B \ra$ counts the moduli space of $(u,l_1,l_2)$ with $l_1,l_2$ be two half infinite gradient trajectories connected at an index $n$ critical point of $g$, i.e.\ the bottom left corner of \Cref{fig:def1}. Then by shrinking the time of the gradient flow lines from $\infty$ to $0$ as in \cite[\S 3.1]{filling} illustrated in the bottom row of \Cref{fig:def1}, and note that $[\check{\gamma}_i]$ is closed in positive symplectic cohomology and $B$ is closed, the count equals to time length $0$ count, which is $\#\cM^{\check{\gamma}_i}_B$.
\end{proof}

To compute $\cM^{\overline{\alpha}}_{\check{\gamma}_j,B}$, we will perform a full neck-stretching on the boundary of $W_D$. For this, we first define the relevant moduli spaces. Let $\widehat{Y}$ denote the symplectization $( (0, \infty)_r\times Y, r\alpha_D)$, which is equipped with a Hamiltonian $H$ such that $H = 0$ on $ (0, 1)\times Y$ and after that it is the same as the $H$ on $\widehat{W}_D$ as in the beginning of \S \ref{s4}. Then we define $\cN^{\overline{\alpha}}_{\check{\gamma}_i,\gamma_j}$ to be the compactification of the
following moduli space
$$\left\{ u:\CP^1\backslash \{\infty,0,1\} \to \widehat{Y}\left| (\rd u-X_H\otimes \beta)^{0,1}=0, u(\infty) = \overline{\alpha}, u(0)=\check{\gamma}_i, u(1) = (0,\gamma_j)\right.\right\}$$
i.e. $u(\infty),u(0)$ are asymptotic to Hamiltonian orbits and $u(1)$ is asymptotic to a Reeb orbit at a negative puncture. We define $\cN^{\gamma_j}_{B}$ to be the compactification of the following moduli space.
$$\left\{u:\C \to \widehat{W}\left| (\rd u)^{0,1}=0, u(\infty) = (+\infty, \gamma_j), u(0)\in B\right.\right\}/ \R\times S^1$$
    \begin{figure}[H] {\small
    \begin{overpic}[scale=0.7]
    {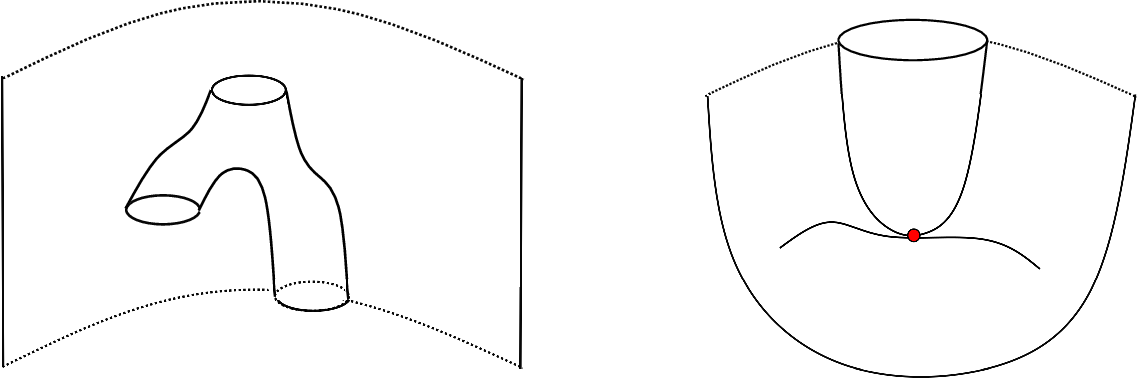}
    \put(20,30) {$Y\times \{\infty\}$}
    \put(17,3) {$Y\times \{0\}$}
    \put(21.5,24.5) {$\overline{\alpha}$}
    \put(13,11) {$\check{\gamma}_i$}
    \put(26,10) {$\gamma_j$}
    \put(80,29) {$\overline{\alpha}$}
    \put(70,10) {$B$}
    \put (90,30) {$Y\times \{\infty\}$}
    \put (90,14) {$\widehat{W}_D$}
    \end{overpic}}
    \caption{$\cN^{\overline{\alpha}}_{\check{\gamma}_i,\gamma_j}$ and $\cN^{\gamma_j}_{B}$}
    \label{fig:n}
    \end{figure}

\begin{proposition}\label{prop:SFT1}
	For a sufficiently stretched almost complex structure, we have
	$$\# \cM^{\overline{\alpha}}_{\check{\gamma}_j,B} = \sum_{k=1}^N \#\left(\cN^{\overline{\alpha}}_{\check{\gamma}_j, \gamma_k} \times \cN^{\gamma_k}_{B}\right).$$
    Similarly, we have 
    $$\# \cM^{\overline{\alpha}}_{A,\check{\gamma}_j} = \sum_{k=1}^N \#\left(\cN^{\overline{\alpha}}_{\gamma_k,\check{\gamma}_j} \times \cN^{\gamma_k}_{A}\right).$$
\end{proposition}
\begin{proof}
We perform a full neck-stretching along the boundary of $W_D$, then any curve in $\cM^{\overline{\alpha}}_{\check{\gamma}_j, B}$ will converge to a SFT-building type curve, since $B \subset W_D$. The top-level curve is necessarily connected by \cite[Proposition 9.17]{cieliebak2018symplectic}, with one fixed negative puncture at $1$ which will connect to the component that eventually intersects $B$. But there might be other free-moving punctures that will eventually be closed off by holomorphic planes. Let $\gamma$ denote the Reeb orbit on the puncture $1$, and $\beta_i$ be those Reeb orbits on those free punctures. Then the virtual dimension of this moduli space of the topmost curve is $\mu_{\CZ}(\overline{\alpha})-\mu_{\CZ}(\check{\gamma}_j)-(\mu_{\CZ}(\gamma)+n-1)-\sum (\mu_{\CZ}(\beta_i)+n-3)$. Although there might be Reeb orbits of $(Y,\alpha_D)$ with $\mu_{\CZ}<1$, all Reeb orbits that can potentially appear in $\{\gamma,\beta_1,\beta_2,\ldots\}$ must have $\mu_{\CZ} \ge 1$ by \Cref{prop:reeb} as their period must be smaller than that of $\alpha$ which is at most $D$. Since we can assume transversely for the topmost curve, as we are using $S^1$-dependent almost complex structures on the topmost level. Therefore the only possibility for  $\gamma$ is one of $\gamma_i$ and there is no $\beta_i$, and we have the expected dimension is $0$, for otherwise, the expected dimension is negative. After the topmost level, we might have several levels of curves in the symplectization, with the topmost curve in the symplectization portion with only one positive puncture asymptotic to $\gamma_i$. Since $\gamma_i$ is simple, the topmost curve in the symplectization portion is necessarily somewhere injective, hence we can assume transversality for the topmost curve. Since the curve must connect to some component that eventually intersects $B$, therefore the curve must have at least one negative end $\gamma'$, then the expected dimension of the moduli space of this curve is $\mu_{\CZ}(\gamma_i)-\mu_{\CZ}(\gamma')-\sum_j (\mu_{\CZ}(\beta_j)+n-3)-1$. Since $\mu_{\CZ}(\gamma_i)$ is the lowest and all SFT grading $\mu_{\CZ}(\beta_i)+n-3$ are positive, we have the dimension is negative. As a result, there is no topmost curve in the symplectization, i.e.\ no curves in the symplectization portion. The last part is the curve in the completion $\widehat{W}_D$, which is exactly $\cN_{\gamma_i,B}$ with expected dimension $0$. Since $\gamma_i$ is simple, transversality is not an issue. Therefore the right-hand side is the count from the fully stretched almost complex structure. 
    \begin{figure}[H] {\small
    \begin{overpic}[ scale=0.6]
    {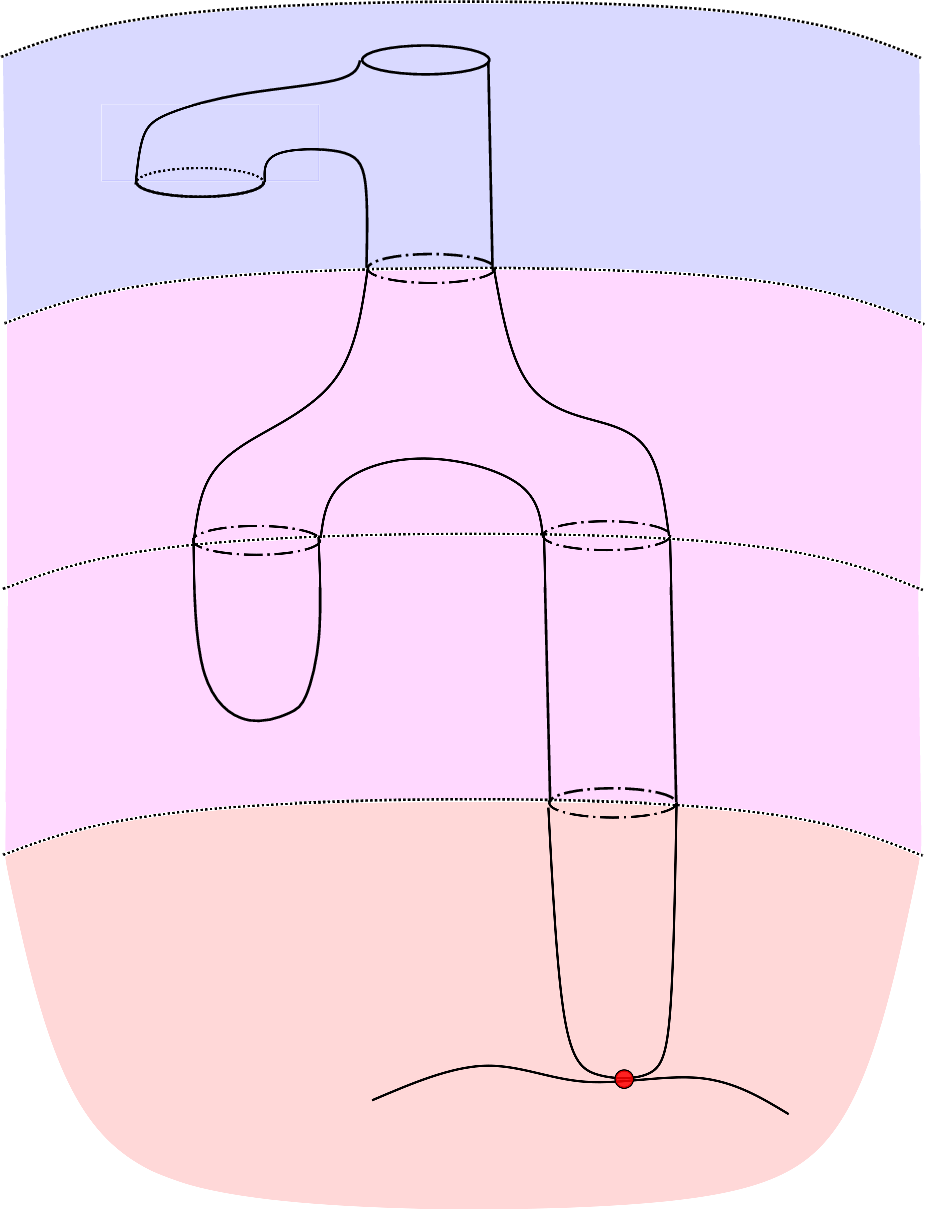}
    \put(35,97) {$\overline{\alpha}$}
    \put(15,80) {$\check{\gamma}_j$}
    \put(35,80) {$\gamma$}
    \put(40,10) {$B$}
    \put(57,40) {trivial cylinder}
    \put(10,20) {$\widehat{W}_D$}
    \put(10,40) {$\widehat{Y}$}
    \put(10,60) {$\widehat{Y}$}
    \put (42,90) {topmost curve}
    \put (48,70) {topmost curve in the} 
    \put (48,66) {symplectization portion}
    \end{overpic}}
    \caption{The fully stretched picture of curves in $\cM^{\overline{\alpha}}_{\check{\gamma}_j, B}$}
    \label{fig:SFT}
    \end{figure}
\noindent
If we assume we start with an almost complex structure that is stretched enough, we may assume in the process of stretching there is no curve in $\cM^{\overline{\alpha}}_{\overline{\beta}}$ and $\cM^{\overline{\beta}}_{\check{\gamma}_i}$ with expected dimension $-1$. Moreover, we also assume there is no curve in $\cM^{\overline{\beta}}_{\overline{\gamma},B}$ with expected dimension $-1$ in the process of stretching, for otherwise, we have a curve in a moduli space of negative dimension after the full stretch by compactness. Since $B$ is closed, in the process of neck-stretching, we only have $\cM^{\overline{\alpha}}_{\check{\gamma}_j,B}$ and $\sum_{k=1}^N \# \left(\cN^{\overline{\alpha}}_{\check{\gamma}_j, \gamma_k} \times \cN^{\gamma_k}_{B}\right)$ as boundary corresponding to the two ends of the neck-stretching parameter.
\end{proof}
We define $\cN^{\check{\gamma}_i}_{\gamma_j}$ to be the compactified moduli space of 
$$\left\{ u: \R \times S^1 \to \widehat{Y}\left| (\rd u-X_H\rd t)^{0,1}=0, u(\infty)=\check{\gamma}_i, u(-\infty)=(0,\gamma_j)\right.\right\}/\R$$
\begin{proposition}	\label{prop:SFT2}
	For a sufficiently stretched almost complex structure and $H$ sufficiently close to the autonomous one which only depends on $r$, we have 
	$$\sum_{i=1}^j\#\left(\cN^{\check{\gamma}_j}_{\gamma_i}\times \cN^{\gamma_i}_{ B}\right) = \la \delta([\check{\gamma}_j]), B \ra,$$
	and $\#\cN_{\check{\gamma}_j,\gamma_j}=1$.
\end{proposition}
\begin{proof}
	The proof is similar to Proposition \ref{prop:SFT1} by fully stretching the moduli space $\cM^{\check{\gamma}_j}_{B}$. By a similar dimension argument, the moduli space must break into $\cN^{\check{\gamma}_j}_{\gamma_i}\times \cN^{\gamma_i}_{B}$. Therefore it suffices to prove $i\le j$. When $H$ is autonomous and only depends on $r$, $X_H$ is parallel to the Reeb vector. Therefore for any solution $u \in \cN^{\check{\gamma}_j}_{\gamma_i}$, we have the $\alpha$-energy $\int u^*(\rd\alpha_D)\ge 0$ (as in \eqref{eqn:positive2}), which implies that the period of $\gamma_j$ must be greater than $\gamma_i$ unless $\gamma_i=\gamma_j$. Then for $H$ sufficient close to the autonomous one, we have $\cN_{\check{\gamma}_j,\gamma_i} \ne \emptyset$ implies that $i\le j$. Moreover, for the autonomous Hamiltonian, any curve $u$ in $\cN_{\check{\gamma}_j,\gamma_j}$ have trivial $\alpha$-energy, hence $u$ is contained in $\Ima(\gamma_j)\times \R \subset \widehat{Y}$. In this case, the equation becomes 
     $$\partial_s u+J(\partial_tu-h'(r)\partial_{\theta})=0, \quad u:\R_s\times S^1_t \to \R_{\rho}\times ([0,T]_{\theta}/{0\simeq T}),$$
     where $T$ is period of $\gamma_j$ so that $\theta$ is the coordinate on $\Ima(\gamma_j)$, $\rho=\log(r)$ on $\widehat{Y}=(0,\infty)_r\times Y$, $J(\partial_{\rho})=\partial_{\theta}$. It has a solution 
     $$u(s,t)=(f(s),Tt+\theta_0)$$
     where $f:\R_s\to \R_\rho$ is a solution to $f'=T-h'(e^f)$ such that $f=Ts+C$ for $s\ll 0$. Different solutions are related by translations on $\R_s$. Since the Floer-cylinder with zero $\alpha$-energy sitting over $\Ima(\gamma_j)$ is in the form of $u(s,t)=(Ts+C,Tt+\theta_0)$ for $s\ll 0$ in a cylindrical coordinate near the negative puncture (not necessarily the standard one used above), by the unique continuation of holomorphic curves (we use Gromov's graph trick to turn the Floer equation into a $\overline{\partial}$ equation), under a biholomorphism, $u(s,t)$ must be $u(s,t)=(f(s),Tt+\theta_0)$ above. In particular, the moduli space
     $$\left\{ u: \R \times S^1 \to \widehat{Y}\left| (\rd u-X_H\rd t)^{0,1}=0, u(\infty)\in \overline{\gamma}_j, u(-\infty)=(0,\gamma_j)\right.\right\}/\R$$
     is diffeomorphic to $S^1$ and is cut out transversely, similar to the case that a trivial cylinder in symplectization is cut out transversely. Therefore the cascades moduli space $\cN_{\check{\gamma}_j,\gamma_j}$ 
     $$\left\{ u: \R \times S^1 \to \widehat{Y}\left| (\rd u-X_H\rd t)^{0,1}=0, u(\infty)=\check{\gamma}_j\in \overline{\gamma}_j, u(-\infty)=(0,\gamma_j)\right.\right\}/\R$$
     is cut out transversely and consists of one point. By the same analysis in \cite[Main Theorem (iii)]{bourgeois2009symplectic}, the cascades moduli space $\cN_{\check{\gamma}_j,\gamma_j}$ has a one-to-one correspondence with the corresponding moduli space $\cN_{\check{\gamma}_j,\gamma_j}$ for a perturbed Hamiltonian. One can avoid such perturbation if one uses a cascades setup with an autonomous Hamiltonian as in \cite{MR2471597,bourgeois2009symplectic} for previous propositions.
\end{proof}

\begin{proof}[Proof of Theorem \ref{thm:main}]
    In view of \Cref{prop:simple}, $c^\Q_1(W)=0$ implies that $W$ is topologically simple. If we are given $\la \delta([\check{\gamma}_j]), B\ra$ and $\#\cN^{\check{\gamma}_i}_{\gamma_j}$, we can solve uniquely $\#\cN^{\gamma_i}_{B}$ by Proposition \ref{prop:SFT2},  since the coefficient matrix is triangular with ones on the diagonal.  Then by \Cref{prop:int,prop:pair,prop:SFT1}, we can represent the intersection $A\cdot B$ by $\cN^{\overline{\alpha}}_{\check{\gamma}_i,\gamma_j}$, $\cN^{\overline{\alpha}}_{\gamma_i,\check{\gamma}_j}$, $\cN^{\check{\gamma}_i}_{\gamma_j}$,  $\la \delta([\check{\gamma}_j]), A\ra$ and  $\la \delta([\check{\gamma}_j]), B\ra$. The first three moduli spaces are independent of the filling, as they are contained in the symplectization. Note that $H^n(W;\Z)$ is independent of filling, and a basis can be represented by combinations of $[\check{\gamma}_i]$, and the same combinations work for $W^{flex}$ by \Cref{prop:n}.  By the universal coefficient theorem, $H_n(W;\Z)$ is isomorphic to the free part of $H^n(W;\Z)$ since $H^*(W;\Z)$ is supported in degree $\le n$.  Fixing a basis of a fixed free part of $H^n(W;\Z)$ induces a dual basis on $H_n(W;\Z)$. We use this dual basis to identify the homology of two fillings, that is we have an isomorphism $H_n(W;\Z)\stackrel{\psi}{\to} H_{n}(W^{flex};\Z)$, such that 
    \begin{equation}\label{eqn:iso}
        \la [\check{\gamma}_i], A \ra = \la [\check{\gamma}_i], \psi(A)\ra, A \in H_n(W;\Z),
    \end{equation}
    where we view $[\check{\gamma}_i]$ both in $H^n(W;\Z)$ and $H^n(W^{flex};\Z)$.
    Since $\cN^{\check{\gamma}_i}_{\gamma_j}$ is contained in the symplectization and is the same for $W$ and $W^{flex}$, by \Cref{prop:SFT2} we have 
   \begin{equation}\label{eqn:iso'}
       \# \cN^{\gamma_i}_B=\# \cN^{\gamma_i}_{\psi(B)}.
   \end{equation}
    Therefore we have 
    \begin{eqnarray*}
        A\cdot B &\stackrel{\text{Prop }  \ref{prop:int}}{\eqdef}&\sum_{i=1}^N\left(\#\left(\cM^{\overline{\alpha}}_{\check{\gamma}_i,B}\times \cM^{\check{\gamma}_i}_{A}\right)+\# \left(\cM^{\overline{\alpha}}_{A,\check{\gamma}_i}\times \cM^{\check{\gamma}_i}_{B}\right) \right)\\
        &\stackrel{\text{Prop } \ref{prop:SFT1}}{\eqdef} &\sum_{i=1}^N\sum_{j=1}^N\left(\#\left(\cN^{\overline{\alpha}}_{\check{\gamma}_i, \gamma_j} \times \cN^{\gamma_j}_{B}\times \cM^{\check{\gamma}_i}_{A}\right)+\# \left(\cN^{\overline{\alpha}}_{\gamma_j,\check{\gamma}_i}\times \times \cN^{\gamma_j}_{A} \times \cM^{\check{\gamma}_i}_{B}\right) \right)\\
       &\stackrel{\text{Prop }  \ref{prop:pair}}{\eqdef} &\sum_{i=1}^N\sum_{j=1}^N\left(\#\left(\cN^{\overline{\alpha}}_{\check{\gamma}_i, \gamma_j} \times \cN^{\gamma_j}_{B}\right)\times \la [\check{\gamma}_i],A\ra+\# \left(\cN^{\overline{\alpha}}_{\gamma_j,\check{\gamma}_i}\times \times \cN^{\gamma_j}_{A} \right)\times \la [\check{\gamma}_i],B\ra \right)\\
       & \stackrel{\eqref{eqn:iso},\eqref{eqn:iso'}}{\eqdef}&\sum_{i=1}^N\sum_{j=1}^N\left(\#\left(\cN^{\overline{\alpha}}_{\check{\gamma}_i, \gamma_j} \times \cN^{\gamma_j}_{\psi(B)}\right)\times \la [\check{\gamma}_i],\psi(A)\ra+\# \left(\cN^{\overline{\alpha}}_{\gamma_j,\check{\gamma}_i}\times \times \cN^{\gamma_j}_{\psi(A)} \right)\times \la [\check{\gamma}_i],\psi(B)\ra \right)\\
       & \eqdef & \psi(A)\cdot \psi (B).
    \end{eqnarray*}
     That is $\psi$ identifies the intersection form.
\end{proof}	

\begin{remark}
A natural question is whether some of the above counts bear homological meaning, i.e.\ can they be phrased as structural maps in ($S^1$-equivariant) symplectic cohomology or symplectic field theory? 
\begin{enumerate}
    \item Under the identification of linearized contact homology and positive $S^1$-equivariant symplectic cohomology proved by Bourgeois and Oancea \cite{MR2471597}, the counting of $\cN_{\gamma_i,B}$ should be contained in the map $SH^*_{+,S^1}(W;R) \to H^{*+1}(W;R)\otimes_R R[u,u^{-1}]/u \to H^{*+1}(W)$.
    \item The counting of $\cN_{\check{\gamma}_i,\gamma_j}$ should be contained in the map $\iota:SH^*_{+}(W) \to SH^*_{+,S^1}(W)$ from the Gysin exact sequence \cite{zbMATH07004403}.
    \item The counting of $\cM_{\overline{\alpha},\check{\gamma}_j,\gamma_k}$ does not give rise to structural maps (but should be a piece of a structure, where we need to consider multiple negative punctures). This is because the moduli space counts solutions to $(\rd u-X_H\otimes \beta)^{0,1}=0$, where we can have Floer type breaking as well as SFT type breaking at $1$. It is important in \Cref{prop:SFT1} that we ask the almost contact structure to be sufficiently stretched, for otherwise the relation could fail. However if we change $\beta$ to be $\rd t$ on $\C^*=(-\infty,+\infty)\times S^1$, and count anchored version (as in \cite{BEE}) of the following curves,
    $$\left\{ u:\CP^1\backslash \{\infty,0,1\} \to \widehat{Y}\left| (\rd u-X_H\otimes \beta)^{0,1}=0, u(\infty) = \overline{\alpha}, u(0)=\overline{\beta}, u(1) = (0,\gamma)\right.\right\}$$
    It should give rise to a map $SH^*_+(W)\to SH^*_+(W)\otimes SH^*_{+,S^1}(W)$ of degree $2n-1$, which we conjecture to be isomorphic to the secondary coproduct  $SH^*_+(W)\to SH^*_+(W)\otimes SH^*_{+,S^1}(W)$ (also of degree $2n-1$) in \cite{EGL,ekholm2017symplectic} composed with $\Id \otimes \iota $.
\end{enumerate}
\end{remark}

\section{Applications}\label{s5}
\begin{proof}[Proof of \Cref{thm:flexQ}]
We use $D^*Q,S^*Q$ to denote the unit disk bundle and the sphere bundle in $T^*Q$. When $\chi(Q)=0$, using the Gysin exact sequence $H^0(Q;\Q)\stackrel{\cup e(T^*Q)}{\longrightarrow} H^n(Q;\Q)\to H^n(S^*Q;\Q)$, we see that $H^n(D^*Q;\Q)\to H^n(S^*Q;\Q)$ is injective for $n=\dim Q$. Then by \Cref{prop:SS}, we have that $H^*(W;\Z)\to H^*(S^*Q;\Z)$ is isomorphic to $H^*(D^*Q;\Z)\to H^*(S^*Q;\Z)$, which is injective, for any exact filling $W$ of $\partial(\mathrm{Flex}(T^*Q))$. Then by \cite[Proposition 3.24]{filling}, we have $W$ is $D^*Q$ glued with a homology cobordism between $S^*Q$. By the same argument in \cite[\S 4]{product}, we can improve the homology cobordism to an h-cobordism when $\pi_1(Q)$ is abelian. Then the interior of $W$ is diffeomorphic to $T^*Q$ as an open manifold by the Mazur trick as in \cite[Theorem 1.2]{product}.
\end{proof}

\begin{proof}[Proof of \Cref{cor:diff}]
   By \Cref{prop:simple} and \cite[Theorem E]{filling}, any exact filling $W$ with $c_1^\Q(W)=0$ is simply connected. Since $H^*(W;\Z)=H^*(W^{flex};\Z)$ by \Cref{prop:simple} and is freely generated and supported in degree $0$ and $n$ and $\dim W\ge 6$, we have $W$ has a handle decomposition of one $0$-handle and several $n$-handles. By \cite[the remark after Corollary 4.6]{smale1962structure}, the diffeomorphism type of such manifold is uniquely determined by the intersection form under the conditions listed. Therefore $W$ is diffeomorphic to $W^{flex}$ by Theorem \ref{thm:main}. When the rank of the intersection form of $W^{flex}$ is $1$, then \Cref{prop:SS} implies that $c_1^\Q(W)=0$ automatically for any exact filling $W$. 
\end{proof}

\begin{proof}[Proof of \Cref{thm:emb}]
    In the first case, if $\partial W$ embeds into $V$ as a separating contact hypersurface with the local Liouville vector field pointing out w.r.t.\ the bounded domain $U$. Then $U$ is a symplectically aspherical filling of $\partial W$ with an exact symplectic form. Although we might not be able to find all the contact hypersurface realizing the ADC property in $U$ (see \cite[\S 8]{filling}), hence \Cref{prop:simple} does not apply directly. However, by comparing $\delta_\partial:SH^{*,<D}_+(U;\Q)\to H^{*+1}(\partial W;\Q)$ to that of $W$ for a suitable $D$, we can prove that $H^1(U;\Q)\to H^1(\partial W;\Q)$ is surjective onto the image of $H^1(W;\Q)\to H^1(\partial W;\Q)$, i.e.\ all of $H^1(\partial W;\Q)$. The surjectivity of $H^1(U;\Q)\to H^1(\partial W;\Q)$ implies that $U$ becomes a Liouville filling of $\partial W$ after a modification of the Liouville form on $V$ with a closed $1$-form on $U$. Then \Cref{prop:simple} and \Cref{thm:main} imply that the rank of the intersection form on $H_n(U;\Q)$ is the same as that of $H_n(W;\Q)$, which is larger than that of $H_n(V;\Q)$. Hence it is impossible to embed $U$ to $V$ topologically. 
    
    In the second case, if $\partial W$ embeds in $P\times \D$ as a contact surface, as $H_{2n-1}(P\times \D;\Z)=0$, it must be separating. Now if the local Liouville vector field points into the compact domain $U$ bounded by $\partial W$, by deleting $U$, we get a symplectically aspherical filling of $\partial(P\times \D)\sqcup \partial W$. Combining  \cite[Theorem 6.6]{moreno2024landscape} and \cite[Theorem 1.2.]{moreno2024rsft}, we know that $\partial(P\times \D)$ is not co-fillable, i.e.\ such a filling with two boundary components can not exist. Hence we reduce the situation back to the first case.
\end{proof}

\bibliographystyle{plain} 
\bibliography{ref}
\Addresses
\end{document}